\newcommand{\referenza}{}
\newtheorem{thm}{Theorem}[section]
\newtheorem*{thm*}{Theorem \referenza}
\newtheorem*{cor*}{Corollary \referenza}
\newtheorem{lem}[thm]{Lemma}
\newtheorem*{lem*}{Lemma \referenza}
\newtheorem*{prop*}{Proposition \referenza}
\newtheorem*{conj*}{Conjecture \referenza}
\newtheorem{rmk}[thm]{Remark}
\newtheorem*{rmk*}{Remark}
\newtheorem{defi}[thm]{Definition}
\numberwithin{equation}{section}
\def \H {\mathcal H}
\def \R {\mathbb R}
\def \C {\mathbb C}
\def \Z {\mathbb Z}
\DeclareMathOperator{\im}{i}
\newcommand{\p}{\partial}
\renewcommand{\bar}{\overline}
\title[Bott--Chern cohomology of SU$(3)$, Spin$(5)$ and G$_2$ and the Pluriclosed flow]{Global stability of the Pluriclosed flow on compact simply-connected simple Lie groups of rank two}
\author{Giuseppe Barbaro}
\address[Giuseppe Barbaro]{Università La Sapienza, Roma}
\email{g.barbaro@uniroma1.it}
\address{Dipartimento di Matematica ``Guido Castelnuovo", Università la Sapienza, Piazzale Aldo Moro, 5, 00185 Roma, Italy} 
\keywords{}
\thanks{The author is supported by GNSAGA of INdAM and by project PRIN2017 ``Real and Complex Manifolds: Topology, Geometry and holomorphic dynamics'' (code 2017JZ2SW5)}
\begin{document}

\begin{abstract}
    We compute the $(1,1)$-Aeppli cohomology of compact simply-connected simple Lie groups of rank two. In particular, we verify that they are of dimension one and generated by the classes of the Bismut flat metrics coming from the Killing forms. This yields a result on the stability of the pluriclosed flow on these manifolds. Moreover, we show that for compact simply-connected simple Lie groups of rank two the Dolbeaut cohomology, as well as the Bott--Chern and the Aeppli cohomologies, arise from just the left-invariant forms and we computed the whole Bott--Chern diamonds of SU$(3)$ and Spin$(5)$ when they are equipped with a left-invariant isotropic complex structure.
\end{abstract}

\maketitle

\section{Introduction}
    Given a Hermitian manifold $(X,J,g)$, the \emph{Bismut connection} $\nabla^{B}$ associated to $(g,J)$ is a Hermitian connection on $X$ with totally skew-symmetric torsion, where by Hermitian connection we mean a connection which is compatible with both the metric and the complex structure, i.e. $\nabla g = \nabla J =0$. It is described with respect to the Levi-Civita connection $\nabla^{LC}$ as, 
    \begin{equation*}
        g\left(\nabla^{B}_{x}y,z\right)=g\left(\nabla^{LC}_{x}y,z\right)+\frac{1}{2}Jd\omega(x,y,z) \;,
    \end{equation*}
    where $\omega$ is the K{\"a}hler form associated to $g$ and $J$ acts as $Jd\omega(\cdot,\cdot,\cdot)=-d\omega(J\cdot,J\cdot,J\cdot)$. Since the Levi-Civita connection is torsion free, the above formula is prescribing the torsion of this connection as 
    $$ T^B (x,y,z)=Jd\omega(x,y,z) \;. $$
    We indicate by $Ric^B$ the Ricci curvature tensor of the Bismut connection, which is $Ric^B (X,Y) := tr_g\Omega^B(X,Y)$, i.e. the contraction of the endomorphism part of the Bismut curvature tensor $\Omega^B(X,Y)=\left[\nabla_X,\nabla_Y\right] - \nabla_{[X,Y]}$. We also use $Ric^B$ to indicate the Ricci form of the Bismut connection which satisfies the following equation (see for example \cite{Me} and the references therein),
    $$ Ric^B(g) = -\sqrt{-1}\p\bar\p\log\omega^n - dd_g^*\omega \;.$$
    In the equality above, $d^*_g=\p^*_g+\bar\p^*_g$ where $\partial^*_g:\land^{p+1,q}X\rightarrow\land^{p,q}X$ and $\overline{\partial}^*_g:\land^{p,q+1}X\rightarrow\land^{p,q}X$ are the $L^2_g$-adjoint operators of $\partial$ and $\overline{\partial}$ respectively. In particular, the $(1,1)$-component of this form is
    \begin{equation}\label{eq: Ricci Bismut 1,1}
        \left(Ric^B(g)\right)^{1,1} = -\sqrt{-1}\p\bar\p\log\omega^n - \left(\partial\partial^*_g \omega +\overline{\partial\partial}^*_g \omega\right) \;.
    \end{equation}
    
    \medskip
    
    In \cite{ST10} Streets and Tian introduced the {\em pluriclosed flow} with the intent to study the complex geometry of Hermitian non-K{\"a}hler manifolds. This is a parabolic flow of Hermitian metrics defined as follows. On a Hermitian manifold $(X,g_0,J)$ it evolves the metric obeying the equation:
    \begin{equation*}
        \begin{cases}
            \frac{\p }{\p t}\,g = - S + Q\\
            g(0)=g_0
        \end{cases}
    \end{equation*}
    where $S$ is the trace in the first two entries of the Chern curvature tensor $\Omega^{Ch}$ and $Q$ is a given quadratic polynomial in the torsion $T^{Ch}$ of the Chern connection. More precisely,
    \begin{equation*}
        S_{i\overline{j}}=(Tr_{g}\Omega^{Ch})_{i\overline{j}}=g^{k\overline{l}}\Omega^{Ch}_{k\overline{l}i\overline{j}} \;,
    \end{equation*}
    and
    \begin{align*}
	    Q_{i\overline{j}}=g^{k\overline{l}}g^{m\overline{n}}T^{Ch}_{ik\overline{n}}T^{Ch}_{\overline{jl}m} \;.
    \end{align*}
    
    In the literature, a Hermitian metric is said to be {\em pluriclosed} or {\em SKT} when its associated K{\"a}hler form $\omega$ satisfies $dd^c \omega=0$. It can be proved that the pluriclosed flow preserves this condition whenever it occurs. Indeed, it evolves a pluriclosed metric in the direction of the $(1,1)$-component of its Bismut Ricci form, which is $dd^c$-closed (see (\ref{eq: Ricci Bismut 1,1})). More precisely, given a complex manifold $(X,J)$ together with a pluriclosed metric $\omega_0$ the pluriclosed flow evolves as
    \begin{equation*}
        \begin{cases}
            \frac{\p }{\p t}\omega = - \left(Ric^B(\omega)\right)^{1,1}\\
            \omega(0)=\omega_0
        \end{cases}
    \end{equation*}
    Moreover, in \cite[Theorem 9]{AOUV} it is proved that metrics with vanishing Bismut curvature, called {\em Bismut flat metrics}, are pluriclosed. Thus, clearly, Bismut flat metrics are fixed points for the pluriclosed flow.
    
    \medskip
    
    Recently, in \cite{GJS} the authors implemented a beautiful machinery based on {\em Generalized Geometry} to compare metrics with the same torsion class, which, for a generic metric $\omega$, is the class of $[\p\omega]\in H^{2,1}_{\bar\p}$. Thanks to it, they proved that the Bismut flat metrics are attractive for the pluriclosed flow in their torsion class. Precisely, they proved the following theorem.
    \begin{thm}[Theorem 1.2 of \cite{GJS}]\label{thm: GJS convergence}
        Let $(X^{2n},J, \omega_{BF})$ be a compact Bismut flat manifold. Given $\omega_0$ a pluriclosed metric such that $[\p\omega_0] = [\p\omega_{BF} ] \in H^{2,1}_{\bar\p}(X)$, the solution to pluriclosed flow with initial data $\omega_0$ exists on $[0, \infty)$ and converges to a Bismut flat metric $\omega_\infty$.
    \end{thm}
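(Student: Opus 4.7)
The plan is to reduce the pluriclosed flow to a potential evolution inside the fixed torsion class, and then to run a Perelman / Lojasiewicz--Simon gradient-flow argument for an appropriate generalized geometric functional.

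First, I would verify that the flow preserves the Dolbeault torsion class. Differentiating and using \eqref{eq: Ricci Bismut 1,1},
\begin{equation*}
    \p_t(\p\omega_t) \;=\; -\p\bigl(Ric^B(\omega_t)\bigr)^{1,1} \;=\; \bar\p\bigl(-\p\bar\p^*_{g_t}\omega_t\bigr),
\end{equation*}
which is $\bar\p$-exact, so $[\p\omega_t]\in H^{2,1}_{\bar\p}(X)$ is constant in $t$. By hypothesis this class equals $[\p\omega_{BF}]$, hence $\omega_t-\omega_{BF}$ stays Aeppli-exact; as a real $(1,1)$-form it admits a $(1,0)$-form potential $\xi_t$ with
\begin{equation*}
    \omega_t \;=\; \omega_{BF} + \p\bar\xi_t + \bar\p\,\xi_t .
\end{equation*}
This recasts the flow as a gauge-broken parabolic system for $\xi_t$, in the spirit of Streets' reduced pluriclosed flow.

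Second, I would lift the problem to generalized geometry. After fixing a gauge, the pluriclosed flow becomes the generalized Ricci flow on the exact Courant algebroid over $X$ twisted by $H=-d^c\omega_{BF}$, and this flow is the negative gradient, modulo diffeomorphisms and $B$-field symmetry, of the dilaton functional
\begin{equation*}
    \lambda(g,H) \;=\; \inf_{\int_X e^{-f} dV_g = 1}\,\int_X \Bigl(R_g-\tfrac{1}{12}|H|_g^2+|\nabla f|_g^2\Bigr)e^{-f}\,dV_g.
\end{equation*}
A Bismut flat metric is a global minimizer of $\lambda$ within its torsion class (its curvature vanishes and $H$ is $\nabla^B$-parallel), so monotonicity of $\lambda$ along the flow yields a uniform a priori energy bound on $\xi_t$.

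Third, I would establish long-time existence. With $\omega_{BF}$ as reference, the evolution of $\xi_t$ is a quasi-linear system that is strictly parabolic modulo gauge. The energy bound from $\lambda$, combined with a maximum principle for the scalar potential and a Moser iteration, gives uniform $L^\infty$ control; parabolic Schauder theory then bootstraps to uniform $C^\infty$ bounds on $[0,\infty)$, ruling out finite-time blow-up.

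Finally, for convergence, the uniform bounds produce smooth subsequential limits $\omega_{t_k}\to\omega_\infty$, and each such limit is a critical point of $\lambda$ in the torsion class, hence Bismut flat. The main obstacle will be upgrading subsequential to full convergence: I would establish an infinite-dimensional Lojasiewicz--Simon inequality for $\lambda$ at $\omega_\infty$, using the real-analytic dependence of $\lambda$ on $(g,H)$ together with the fact that its linearization at a Bismut flat metric is an elliptic self-adjoint operator whose kernel is exactly tangent to the moduli of Bismut flat metrics in the torsion class. Combined with the monotonicity of $\lambda$, this inequality forces the genuine $C^\infty$ convergence $\omega_t\to\omega_\infty$ on $[0,\infty)$.
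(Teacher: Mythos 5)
This statement is Theorem 1.2 of \cite{GJS}, which the paper only quotes and never proves, so there is no internal argument to compare yours against; your proposal has to stand on its own, and as written it has two genuine gaps.

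First, the reduction step fails. From $\p_t(\p\omega_t)$ being $\bar\p$-exact you correctly conclude that $[\p\omega_t]=[\p\omega_{BF}]\in H^{2,1}_{\bar\p}$ for all $t$, but this does \emph{not} imply that $\omega_t-\omega_{BF}$ is Aeppli-exact. The hypothesis only gives $\p(\omega_t-\omega_{BF})=\bar\p\beta_t$ for some $(2,0)$-form $\beta_t$, whereas Aeppli-exactness is the stronger assertion $\omega_t-\omega_{BF}\in \mbox{Im }\p+\mbox{Im }\bar\p$; the natural map $H^{1,1}_A\to H^{2,1}_{\bar\p}$, $[\omega]\mapsto[\p\omega]$, need not be injective, so equality of torsion classes does not produce the $(1,0)$-form potential $\xi_t$ you base everything on. This is exactly why Garcia--Fernandez, Jordan and Streets encode the pair $(\omega_t-\omega_{BF},\beta_t)$ as a Hermitian metric on a holomorphic Courant algebroid instead of working with a classical Aeppli potential; any repair of your scheme must carry the $(2,0)$-component along.

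Second, the analytic core is asserted rather than proved. Monotonicity of the dilaton functional $\lambda$ yields at best an integral quantity converging to a critical value; it gives no uniform equivalence of $\omega_t$ with $\omega_{BF}$, no $L^\infty$ bound on the potential, and hence no uniform parabolicity — and the claim that a Bismut-flat metric is a \emph{global minimizer} of $\lambda$ in its torsion class is unsubstantiated (Bismut-flat metrics are critical points, i.e.\ steady generalized solitons, but minimality is neither obvious nor what drives the known proof). Likewise there is no ``scalar potential'' in this problem to which a maximum principle or Moser iteration applies: the reduced unknowns are a $(1,0)$-form (plus the $(2,0)$-form $\beta_t$), and pluriclosed flow is not a scalar Monge--Amp\`ere-type flow. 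The actual proof obtains the crucial uniform metric bounds from maximum-principle (Donaldson/Yau-type trace) estimates on the generalized metric measured against the flat background structure, exploiting Bismut flatness, and then bootstraps with Calabi/Evans--Krylov-type higher-order estimates; convergence does not pass through a Lojasiewicz--Simon inequality for $\lambda$, which in your outline is an additional unproved ingredient (analyticity of $\lambda$, identification of the kernel of its second variation modulo diffeomorphism and $B$-field gauge) of comparable difficulty to the theorem itself.
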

    It is conjectured that the SKT metrics with vanishing Bismut Ricci form should have a similar behaviour, however, this is the first result showing that a natural class of non-K{\"a}hler metrics is attractive for this flow.
    
    \medskip
    
    We fix here the notation for the {\em Bott--Chern} and {\em Aeppli cohomologies}. The Bott--Chern cohomology, \cite{bott-chern}, is the $\Z^2$-graded algebra
    $$ H^{\bullet,\bullet}_{BC}(X) \;:=\; \frac{\ker\p\cap\ker\bar\p}{\mbox{Im }\p\bar\p} \;, $$
    while the Aeppli cohomology, \cite{aeppli}, is the $\Z^2$-graded $H_{BC}(X)$-module
    $$ H^{\bullet,\bullet}_{A}(X) \;:=\; \frac{\ker\p\bar\p}{\mbox{Im }\p + \mbox{Im }\bar\p} \;, $$
    both with respect to the double complex $\left(\wedge^{\bullet,\bullet}X,\, \p,\, \bar\p\right)$.
    
    \medskip
    
    Theorem \ref{thm: GJS convergence} together with the knowledge of the $(1,1)$-Aeppli cohomology of the manifold lead to global stability results. For example, the Hopf surface U$(1)\times \mbox{SU}(2)$ and the Calabi--Eckman manifold $\mbox{SU}(2)\times \mbox{SU}(2)$ are known to be Bismut flat when they are equipped with their standard complex structures and the metrics $\omega_{BF}$'s coming from the Killing forms; moreover, in both cases the $(1,1)$-Aeppli cohomology is one-dimensional and generated by the class of $\omega_{BF}$, see Theorem 3.3 and Proposition 3.4 of \cite{AT15}. Given any pluriclosed metric $\omega$ on them, integrating over the fiber $\mathbb{T}^2$ we see that $[\omega]\neq 0$ in $H^{1,1}_A(X)$, and hence there exists a positive $\lambda$ such that $[\omega] = \lambda[\omega_{BF}]$ in $H^{1,1}_A(X)$. Using now that $[\p\omega] \in H^{2,1}_{\bar\p}(X)$ is the outcome of the natural map
    $$ H^{1,1}_A \xrightarrow{\p} H^{2,1}_{\bar\p}: [\omega] \mapsto [\p\omega]\;,$$
    we have that Theorem \ref{thm: GJS convergence} applies giving convergence of the pluriclosed flow to a Bismut-flat structure for any initial pluriclosed data.
    
    \medskip
    
    In this note we produce new examples of global stability for the pluriclosed flow other than on Calabi--Eckmann manifolds. We consider compact simply-connected simple Lie groups of rank two equipped with their isotropic complex structures (as defined in Section \ref{sec: isotrop cplx structure}) and the Hermitian metrics coming from the Killing forms $\omega_{BF}$'s. These are classified as SU$(3)$, Spin$(5)$ and G$_2$ (see Remark \ref{rmk: Pittie's presentation}). Moreover, they are Bismut flat and we prove that their $(1,1)$-Aeppli cohomologies are 
    $$ H^{1,1}_A \left(\mbox{SU}(3)\right) = \C\left<[\omega_{BF}]\right> \;; H^{1,1}_A \left(\mbox{Spin}(5)\right) = \C\left<[\omega_{BF}]\right> \;; H^{1,1}_A \left(\mbox{G}_2\right) = \C\left<[\omega_{BF}]\right> \;. $$
    We performed these computations combining the results of \cite{AK17} with the presentation of the Dolbeaut cohomology for compact simply-connected simple Lie group of rank two given in \cite{Pit}:
    \begin{equation}\label{eq: model cohom}
        H_{\bar\p}^{\bullet,\bullet}\left(X^{2n}\right) \;\simeq\; 
        \begin{cases}
            \left. \C\left[y_{1,1}\right] \middle\slash \left(\left(y_{1,1}\right)^{n-1}\right) \right. \otimes \wedge^{\bullet,\bullet} \left( \C\left\langle [u_{2,1}] \right\rangle \oplus \C\left\langle [x_{0,1}] \right\rangle \right) \; \text{  isotropic complex structure}\\
            \wedge^{\bullet,\bullet} \left( \C\left\langle [x_{0,1}] \right\rangle \oplus \C\left\langle [y_{1,1}] \right\rangle \oplus \C\left\langle [u_{n,n-1}] \right\rangle \right) \hspace{3.5em}\text{non-isotropic complex structure}
        \end{cases}
    \end{equation} 
    where subscripts denote bi-degree of the generators $x,y,u$ and the isotropic complex structures are the ones that together with the Killing metric give a Hermitian structure on $X$. We thus prove the following result.
    \begin{thm*}[Theorems \ref{thm: mean theorem1}, \ref{thm: mean theorem2} \& \ref{thm: mean theorem3}]
        Given a compact simply-connected simple Lie group of rank two $X$, consider the Bismut flat Hermitian structure $(J,\omega_{BF})$ coming from the Killing form. Then for any pluriclosed metric $\omega_0$ on $\left(X,J\right)$ there exists a positive $\lambda$ such that the solution to the pluriclosed flow with initial data $\omega_0$ exists on $[0, \infty)$ and converges to $\lambda\,\omega_{BF}$. 
    \end{thm*}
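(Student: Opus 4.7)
The strategy is to reduce the statement to Theorem \ref{thm: GJS convergence} using the one-dimensionality of $H^{1,1}_A(X)$. Once $H^{1,1}_A(X)=\C\langle[\omega_{BF}]\rangle$ is established, any pluriclosed $\omega_0$ satisfies $[\omega_0]=\lambda[\omega_{BF}]$ for some $\lambda\in\C$; taking conjugates of the two real representatives forces $\lambda\in\R$.

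The key step is to upgrade $\lambda$ to a strictly positive real number. Following the blueprint used for the Calabi--Eckmann manifold, I would produce a real linear functional on $H^{1,1}_A(X)$ that is strictly positive on the Aeppli class of any Hermitian $(1,1)$-form. The natural candidate arises from the non-degenerate pairing
\[
H^{1,1}_A(X)\times H^{n-1,n-1}_{BC}(X)\longrightarrow\C, \qquad ([\alpha],[\beta])\mapsto\int_X\alpha\wedge\beta,
\]
evaluated against a non-negative representative of a nontrivial class in $H^{n-1,n-1}_{BC}(X)$ --- such as $\omega_{BF}^{n-1}$ or a suitable bi-invariant closed form built from the Killing data. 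Evaluating on both sides of $[\omega_0]=\lambda[\omega_{BF}]$ then expresses $\lambda$ as the ratio of two strictly positive integrals, forcing $\lambda>0$.

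With $\lambda>0$ in hand, $\lambda\omega_{BF}$ is again a Bismut flat Hermitian metric, and pushing the Aeppli identity through the natural map $\partial\colon H^{1,1}_A(X)\to H^{2,1}_{\bar\partial}(X)$ yields $[\partial\omega_0]=[\partial(\lambda\omega_{BF})]$. Theorem \ref{thm: GJS convergence} applied to $(X,J,\lambda\omega_{BF})$ then produces a long-time solution of the pluriclosed flow starting at $\omega_0$ and converging to some Bismut flat metric $\omega_\infty$. To pin down $\omega_\infty=\lambda\omega_{BF}$, I would observe from \eqref{eq: Ricci Bismut 1,1} that $-(Ric^B)^{1,1}\in\mathrm{Im}\,\partial\bar\partial+\mathrm{Im}\,\partial+\mathrm{Im}\,\bar\partial$, so that the flow preserves the $(1,1)$-Aeppli class, giving $[\omega_\infty]=\lambda[\omega_{BF}]$; the rigidity of compatible Bismut flat metrics on these groups --- all being scalar multiples of $\omega_{BF}$ --- then forces $\omega_\infty=\lambda\omega_{BF}$.

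The main obstacle I anticipate is the positivity step: unlike the Calabi--Eckmann case, where integration over a complex $\mathbb{T}^2$-fiber supplies such a positive functional immediately, here one must work explicitly with the Aeppli--Bott--Chern pairing, invoking the cohomology description of \cite{AK17} together with \eqref{eq: model cohom} to produce and analyse a concrete Bott--Chern generator in bi-degree $(n-1,n-1)$. The rest of the argument --- preservation of the Aeppli class along the flow and identification of the limit --- is essentially formal.
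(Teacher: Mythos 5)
Your overall scheme—reduce to Theorem \ref{thm: GJS convergence} via $\dim H^{1,1}_A(X)=1$, note that the flow moves $\omega$ inside $\mathrm{Im}\,\partial+\mathrm{Im}\,\bar\partial$ by (\ref{eq: Ricci Bismut 1,1}), and finish with rigidity of bi-invariant metrics—is the paper's scheme. But the step you yourself flag as the obstacle, positivity of $\lambda$, is a genuine gap, and without it the argument never starts: you cannot even rule out $\lambda=0$, i.e. $[\omega_0]=0$ in $H^{1,1}_A(X)$, in which case Theorem \ref{thm: GJS convergence} gives nothing about $\omega_{BF}$. Your concrete candidate for the Aeppli--Bott--Chern pairing does not work as stated: to pair against $H^{n-1,n-1}_{BC}(X)$ you need a representative in $\ker\partial\cap\ker\bar\partial$, so using $\omega_{BF}^{\,n-1}$ would require $d\bigl(\omega_{BF}^{\,n-1}\bigr)=0$, i.e. that $\omega_{BF}$ be balanced. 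It is not: $\omega_{BF}$ is a non-K\"ahler SKT metric whose Lee form is a nonzero multiple of the dual of the sum of the positive coroots, so $\omega_{BF}^{\,n-1}$ is not closed and the proposed ratio of integrals is not defined on cohomology. Producing a ``suitable bi-invariant closed positive $(n-1,n-1)$ representative'' is precisely the missing content, not a routine verification.

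The irony is that the Calabi--Eckmann trick you dismiss is exactly how the paper closes this gap. In the Samelson construction (Section \ref{sec: isotrop cplx structure}) the complex structure is built by first fixing a complex structure on the maximal torus, so the maximal torus $\mathbb{T}^2\subset X$ is a compact complex curve. Integration over it descends to $H^{1,1}_A(X)$: on a curve the $(2,0)$ and $(0,2)$ components restrict to zero, so $\int_{\mathbb{T}^2}\partial\alpha=\int_{\mathbb{T}^2}d\alpha=0$ by Stokes, and likewise for $\bar\partial$. This functional is strictly positive on every positive $(1,1)$-form, so evaluating it on $[\omega_0]=\lambda[\omega_{BF}]$ gives both $\lambda\neq0$ and $\lambda>0$ in one stroke; with that, your application of Theorem \ref{thm: GJS convergence} to $\lambda\omega_{BF}$ and your identification of the limit go through. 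Finally, the rigidity you merely assert (``all compatible Bismut flat metrics are multiples of $\omega_{BF}$'') should be argued as in the paper: Bismut flat metrics are bi-invariant, invariant symmetric bilinear forms on a simple Lie algebra are multiples of the Killing form, and Lemma \ref{lem: Milnor} pins down the group structure.
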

    
    For completeness, we compute the whole Hodge diamond for the Bott--Chern cohomology of SU$(3)$ and Spin$(5)$ when they are equipped with their isotropic complex structures (in Section \ref{sec: computations}). In particular, we prove that for compact simply-connected simple Lie groups of rank two the whole Dolbeaut, Bott--Chern and Aeppli cohomologies arise from just the left-invariant classes and we have the following Bott--Chern numbers, respectively for SU$(3)$ and Spin$(5)$:
    $$
    \begin{array}{ccccccccc}
        & & & & 1 & & & & \\
        & & & 0 &  & 0 & & & \\
        & & 0 & & 2 & & 0 & & \\
        & 0 & & 1 & & 1 & & 0 & \\
        0 & & 0 & & 2 & & 0 & & 0 \\
        & 0 & & 1 & & 1 & & 0 & \\
        & & 0 & & 1 & & 0 & &   \\
        & & & 1 & & 1 & & &  \\
        & & & & 1 & & & & \\
    \end{array} \; \quad 
    \text{  and  } \quad \; 
    \begin{array}{ccccccccccc}
        & & & & & 1 & & & & &  \\
        & & & & 0 & & 0 & & & & \\
        & & & 0 & & 2 & & 0 & & & \\
        & & 0 & & 1 & & 1 & & 0 & & \\
        & 0 & & 0 & & 2 & & 0 & & 0 & \\
        0 & & 0 & & 1 & & 1 & & 0 & & 0 \\
        & 0 & & 0 & & 2 & & 0 & & 0 & \\
        & & 0 &  & 1 & & 1 & & 0 & & \\
        & & & 0 & & 1 & & 0 & & & \\
        & & & & 1 & & 1 & & & & \\
        & & & & & 1 & & & & & 
    \end{array} 
    $$
    A direct computation can also be performed to compute the Bott--Chern numbers of G$_2$.

\section*{Acknowledgements}
I would like to thank Mario Garcia--Fernandez for many useful suggestions on the topics of this note. I am very thankful to him, the ICMAT and the UAM for their warm hospitality and to all people there for the stimulant environment. I am also grateful to Daniele Angella and to Francesco Pediconi for the help provided during this work. Many thanks also to the anonymous Referees for their useful comments.

\section{Complex structures on Bismut flat manifolds}
     In \cite{Sam} Samelson showed (by an explicit construction) that any even-dimensional compact Lie group admits a left-invariant complex structure compatible with the bi-invariant metric coming from the Killing form. 
     
     In \cite{AI} Alexandrov and Ivanov showed that any even dimensional connected Lie group equipped with a bi-invariant metric $g$ and a left-invariant complex structure which is compatible with $g$ is Bismut flat. Afterwords, in \cite{WYZ} the authors showed that up to taking the universal cover, these are the only existing Bismut flat manifolds. In other words, simply-connected Bismut flat manifolds have been characterized as {\em Samelson spaces}, which definition is as follows.
    \begin{defi}[\cite{WYZ}]
        A Samelson space is a Hermitian manifold $(G, g, J)$, where $G$ is a connected and simply-connected, even-dimensional Lie group, $g$ a bi-invariant metric on $G$, and $J$ a left-invariant complex structure on $G$ that is compatible with $g$.
    \end{defi}
    By Milnor's Lemma (Lemma 7.5 of \cite{Mil}), a simply-connected Lie group $G'$ with a bi-invariant metric must be the product of a compact semisimple Lie group with an additive vector group.
    \begin{lem}[Lemma 7.5 of \cite{Mil}]\label{lem: Milnor}
        Let $G$ be a simply-connected Lie group with a bi-invariant metric $\langle \cdot ,\cdot \rangle$. Then $G$ is isomorphic and isometric to the product $G_1 \times \cdots \times G_r \times {\mathbb R}^k$ where each $G_i$ is a simply-connected compact simple Lie group and ${\mathbb R}^k$ is the additive vector group with the flat metric.
    \end{lem}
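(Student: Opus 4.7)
The plan is to reduce the statement to a Lie algebra decomposition and then integrate it up to the group using the simple-connectedness of $G$.

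First, I would exploit the $\mathrm{Ad}$-invariance of the inner product $\langle \cdot, \cdot \rangle$ that the bi-invariant metric induces on $\mathfrak{g}$. The crucial observation is that the orthogonal complement of any ideal $\mathfrak{h} \subset \mathfrak{g}$ is again an ideal: for $x \in \mathfrak{h}^{\perp}$, $y \in \mathfrak{g}$ and $z \in \mathfrak{h}$ one has $\langle [y,x], z\rangle = -\langle x, [y,z]\rangle = 0$ since $[y,z]\in\mathfrak{h}$. Starting from the center $\mathfrak{z}$ and iterating, I would decompose
\begin{equation*}
\mathfrak{g} \;=\; \mathfrak{z} \oplus \mathfrak{g}_1 \oplus \cdots \oplus \mathfrak{g}_r
\end{equation*}
as an orthogonal sum of pairwise commuting ideals, with $\mathfrak{z}$ abelian and each $\mathfrak{g}_i$ simple.

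Next, I would check that each $\mathfrak{g}_i$ is of compact type. Since $\langle \cdot, \cdot \rangle$ is $\mathrm{Ad}$-invariant, every operator $\mathrm{ad}_X$ is skew-symmetric with respect to it, so its eigenvalues on the complexification are purely imaginary; consequently the Killing form $B_i$ of $\mathfrak{g}_i$ satisfies $B_i(X,X) = \mathrm{tr}(\mathrm{ad}_X^2) \le 0$ for every $X \in \mathfrak{g}_i$. Cartan's criterion makes $B_i$ non-degenerate on the simple ideal $\mathfrak{g}_i$, forcing it to be negative definite, which is exactly the compact-type condition.

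Finally, I would pass to the Lie group level. For each compact semisimple $\mathfrak{g}_i$ Weyl's theorem provides the corresponding simply-connected compact simple Lie group $G_i$, while $\mathfrak{z}$ integrates to $\mathbb{R}^k$ with the flat metric. The algebra splitting then lifts to a Lie group isomorphism $G \cong G_1 \times \cdots \times G_r \times \mathbb{R}^k$; orthogonality of the splitting together with bi-invariance of $g$ upgrades this identification to an isometry for the product of the corresponding bi-invariant metrics. The main obstacle is precisely this last step: one must verify that the smooth map obtained by multiplying the inclusions of the various subgroups into $G$ is a global diffeomorphism rather than merely a covering, and this is exactly where the simply-connected hypothesis on $G$ is essential (without it only a finite quotient would be recovered, since the center of $G_1 \times \cdots \times G_r$ can be non-trivial).
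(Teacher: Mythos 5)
The paper gives no proof of this lemma at all --- it is quoted directly from Milnor \cite{Mil} --- and your argument is essentially Milnor's original one: Ad-invariance of the inner product makes the orthogonal complement of any ideal an ideal, yielding an orthogonal splitting of ${\mathfrak g}$ into the center and simple ideals; skew-symmetry of each $\mathrm{ad}_X$ forces the Killing form of every simple factor to be negative definite, hence of compact type; and simple connectedness identifies $G$ with the product of the corresponding simply-connected groups, isometrically because the two bi-invariant metrics agree at the identity. The only step you pass over is why the minimal ideals orthogonal to the center ${\mathfrak z}$ are simple rather than one-dimensional abelian: if ${\mathfrak a}$ is an abelian ideal, then for $x\in{\mathfrak a}$ and $y\in{\mathfrak g}$ one has $\langle [x,y],[x,y]\rangle = -\langle y,[x,[x,y]]\rangle = 0$ because $[x,y]\in{\mathfrak a}$ and ${\mathfrak a}$ is abelian, so every abelian ideal is central and no abelian summand can occur inside ${\mathfrak z}^\perp$. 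With that one line added, your proof is complete and coincides with the argument in the cited source.
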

    We can go even further in the classification of the Bismut flat manifolds; indeed, the compact simply-connected simple Lie groups are fully classified, they are:
    \begin{align*}
        A_n &= \mbox{SU}(n+1),\; n\geq 1  &&\dim(A_n) = n(n+2) &&& \mbox{rank}(A_n)=n;\\
        B_n &= \mbox{Spin} (2n+1),\; n\geq 2 &&\dim(B_n) = n(2n+1) &&& \mbox{rank}(B_n)=n;\\
        C_n &= \mbox{Sp}(2n),\; n\geq 3 &&\dim(C_n) = n(2n+1) &&& \mbox{rank}(C_n)=n;\\
        D_n &= \mbox{Spin} (2n),\; n\geq 4 &&\dim(D_n) = n(2n-1) &&& \mbox{rank}(D_n)=n;\\
        E_6 & &&\dim(E_6) = 78 &&& \mbox{rank}(E_6)=6;\\
        E_7 & &&\dim(E_7) = 133 &&& \mbox{rank}(E_7)=7;\\
        E_8 & &&\dim(E_8) = 248 &&& \mbox{rank}(E_8)=8;\\
        F_4 & &&\dim(F_4) = 52 &&& \mbox{rank}(F_4)=4;\\
        G_2 & &&\dim(G_2) = 14 &&& \mbox{rank}(G_2)=2.
    \end{align*}
    
    \begin{rmk}\label{rmk: Pittie's presentation}
        A$_2=\mbox{SU}(3)$, B$_2=\mbox{Spin}(5)$ and $\mbox{G}_2$ are the only compact simply-connected simple Lie groups of rank two. Hence, the Dolbeaut cohomology of these manifolds can be computed using Pittie's cohomology presentation (\ref{eq: model cohom}).
    \end{rmk}
    
    \medskip
    
    In \cite{Pit} Pittie gave a complete description of the moduli of left-invariant, integrable complex structures on even-dimensional compact Lie groups, proving that they all come from Samelson's construction in \cite{Sam}, namely, from a choice of a maximal torus, a complex structure on the Lie algebra of the torus, and a choice of positive roots for the Cartan decomposition. Let us now recall this construction with more details. 
    
    In the following let $G$ be an even dimensional connected Lie group equipped with a bi-invariant metric $\langle \cdot ,\cdot \rangle$ and denote by ${\mathfrak g}$ the Lie algebra of $G$ and by ${\mathfrak g}^\C $ its complexification. $\langle \cdot ,\cdot \rangle$ will also denote the induced inner product on ${\mathfrak g}$. 
    
    We know that the left-invariant complex structures on $G$ are linear maps $J: {\mathfrak g}  \rightarrow {\mathfrak g}$ such that $J^2=-I$ and
    \begin{equation*}
        J ( [X,Y] -[JX,JY] ) = [JX,Y] + [X, JY]
    \end{equation*}
    for any $X$, $Y$ in ${\mathfrak g}$.
    These are in one to one correspondence with the complex Lie subalgebras ${\mathfrak s} \subset {\mathfrak g}^\C$, such that $\langle {\mathfrak s}, {\mathfrak s} \rangle =0$, ${\mathfrak s} \cap {\mathfrak g}=0$, and ${\mathfrak s} \oplus \overline{{\mathfrak s}} = {\mathfrak g}^\C$. Such subspaces are called {\em Samelson subalgebras} of ${\mathfrak g}^\C$.
    
    Given $K$ a maximal torus of $G$, it is well-known that one has the $\mbox{ad}(K)$-invariant decomposition 
    \begin{equation*} 
        {\mathfrak g}^\C = {\mathfrak k}^\C \oplus \sum_{\alpha\in R^+} {\mathfrak g}_{\alpha} \oplus {\mathfrak g}_{-\alpha} ,
    \end{equation*}
    where ${\mathfrak k}^\C $ denotes the complexification of ${\mathfrak k}$, $R^+$ is the space of positive roots and
    $$ {\mathfrak g}_{\alpha} := \left\{v \in {\mathfrak g}^\C \; \big| \; [H, v] = \alpha(H)v \; \forall\; H \in {\mathfrak k}\right\}\;. $$
    
    Since $\dim(G)$ is even, we know that the abelian Lie algebra ${\mathfrak k}$ is even dimensional. So we can choose an almost complex structure on ${\mathfrak k}$ that is compatible with the metric. This is equivalent to choose a complex subalgebra ${\mathfrak a} \subset {\mathfrak k}^\C$ such that $\langle {\mathfrak a}, {\mathfrak a} \rangle =0$, ${\mathfrak a} \cap {\mathfrak k}=0$, and ${\mathfrak a} \oplus \overline{{\mathfrak a}} = {\mathfrak k}^\C$.
    Now one could simply take
    \begin{equation*}
        {\mathfrak s} = {\mathfrak a} \oplus \sum_{\alpha\in R^+} {\mathfrak g}_{\alpha}
    \end{equation*}
    to be the Samelson subalgebra. Pittie \cite{Pit} proved that, any left-invariant complex structure on $G$ is obtained this way. Moreover, he described the moduli space of left-invariant complex structures on $G$ as
    $$ m_2(G) = \left(GL(2k, \R)/GL(k, \C)\right) / F \;, $$
    where $k$ is the rank of $G$ and $F$ is a discrete group generated by the automorphisms of the abelian factor of $G$, the automorphisms of the Dynkin diagrams of the simple factors, and permutations among isomorphic simple factors. Here $m_2(G)$ is the space of left-invariant complex structures on $G$ up to automorphisms of $G$. Among these, the isotropic ones are given by the quotient by $F$ of $O(2k)/U(k)$.
    
    
    As a consequence, the moduli space of left-invariant complex structures of a compact simply-connected rank two simple Lie group $X$ is given by
    $$ m_2 (X) = \left(\H_+ \cup \H_-\right)/F, $$
    where $\H_\pm$ represent respectively the upper and lower half planes in $\C$; see the example in \cite[page 123]{Pit}. Indeed, in this case $GL(2,\R)/GL(1,\C) \simeq \H_+ \cup \H_-$. Moreover, $O(2)/U(1)\simeq\{\pm i\}$ as a subset of $\H_+ \cup \H_-$. For SU(3) we have that $F=\Z/2$, hence there is a unique isotropic left-invariant complex structure on it up to automorphisms. On the other hand, $Spin(5)$ and $G_2$ have two isotropic left-invariant complex structures up to automorphisms, since $F$ is trivial for them.

\section{The isotropic complex structure on compact simply-connected simple Lie groups of rank two}\label{sec: isotrop cplx structure}
    
    In this section we follow the Samelson construction to describe the left-invariant complex structures on compact simply-connected simple Lie groups of rank two. We are going to focus on the complex structures which are isotropic with respect to the Killing metric. In the following $\langle \cdot ,\cdot \rangle$ will denote the inner product on the algebra induced by the Killing form. Since the only compact simply-connected simple Lie groups of rank two are SU$(3)$, Spin$(5)$ and G$_2$ (see Remark \ref{rmk: Pittie's presentation}) we split this section in three parts. However, we may use the same symbols for objects which refer to different groups.
    
\subsection{SU$(3)$}\label{subsec: isotrop cplx structure SU}
    The group SU(3) is the group of $3\times 3$ unitary matrices with unit determinant,
    $$ \mbox{SU}(3):=\{U \in M(3,3;\C) \;|\; UU^{\dagger}=Id \;\wedge\; \det U = 1 \}\;.$$
    By differentiating this two conditions we get that its Lie algebra is made by the skew-Hermitian $(3\times3)$-matrices with zero trace, i.e.
    \begin{equation*}
        \begin{pmatrix}
            a\im & c + \im\, d & e + \im\,f\\
            -c + \im\,d & b\im & g + \im\,h\\
            -e + \im\,f & - g + \im\,h & -a\im - b\im
        \end{pmatrix}\;,
    \end{equation*}
    for real parameters $a,b,c,d,e,f,g,h$.
    We can take $\im$-times the Gell--Mann matrices as basis of the SU(3) Lie algebra $\mathfrak{su}(3)$:
    \begin{align*}
        &e^1= \begin{pmatrix}
                0 & \im & 0\\
                \im & 0 & 0\\
                0 & 0 & 0
             \end{pmatrix}\;
        &&e^2= \begin{pmatrix}
                0 & 1 & 0\\
                -1 & 0 & 0\\
                0 & 0 & 0
             \end{pmatrix}\;
        &&&e^3= \begin{pmatrix}
                \im & 0 & 0\\
                0 & -\im & 0\\
                0 & 0 & 0
             \end{pmatrix} \\
        &e^4= \begin{pmatrix}
                0 & 0 & \im\\
                0 & 0 & 0\\
                \im & 0 & 0
             \end{pmatrix}\;
        &&e^5= \begin{pmatrix}
                0 & 0 & 1\\
                0 & 0 & 0\\
                -1 & 0 & 0
             \end{pmatrix} &&& \\
        &e^6= \begin{pmatrix}
                0 & 0 & 0\\
                0 & 0 & \im\\
                0 & \im & 0
             \end{pmatrix} \;
        &&e^7= \begin{pmatrix}
                0 & 0 & 0\\
                0 & 0 & 1\\
                0 & -1 & 0
             \end{pmatrix}\;
        &&&e^8= \frac{1}{\sqrt{3}}\begin{pmatrix}
                \im & 0 & 0\\
                0 & \im & 0\\
                0 & 0 & -2\im
             \end{pmatrix}
    \end{align*} 
    In this way we have the global left-invariant frame $\left\{e^1,e^2,e^3,e^4,e^4,e^5,e^6,e^7,e^8\right\}$ on SU(3) with structure constants 
    $$ \left[e^i,e^j\right]=2\sum_{k=1}^8\lambda^{ijk}e^k $$  given by
    \begin{table}[h!]
        \begin{tabular}{l|lllllllll}
            ijk & 123 & 147 & 156  & 246 & 257 & 345 & 367  & 458         & 678         \\ \hline
            $\lambda^{ijk}$ & $-1$   & $-\frac{1}{2}$ & $\frac{1}{2}$ & $-\frac{1}{2}$ & $-\frac{1}{2}$ & $-\frac{1}{2}$ & $\frac{1}{2}$ & $-\frac{\sqrt{3}}{2}$ & $-\frac{\sqrt{3}}{2}$
        \end{tabular}
    \end{table}\\
    Here $\lambda^{ijk} = (-1)^{|\sigma|}\lambda^{\sigma(i,j,k)}$ for any permutation $\sigma$. With this notation 
    $$ \left<e^i,e^j\right>= 4\sum_{p,q=1}^8\lambda^{ipq}\lambda^{jpq}\;. $$
    Thus we notice that all the $e^i$ have the same norm.
    
    The maximal torus in SU(3) is given by the diagonal matrices and its Lie algebra $\mathfrak{k}\subset\mathfrak{su}(3)$ is generated by $e^3$ and $e^8$. The remaining six generators, outside the Cartan subalgebra, could be arranged into six roots. Consider $e^1\pm\im e^2$, $e^4\pm\im e^5$ and $e^6\pm\im e^7$; then we have the following relations:
    \begin{align*}
        &\left[e^3,e^1\pm\im e^2\right]=\pm2\im\left(e^1\pm\im e^2\right)
        &&\left[e^8,e^1\pm\im e^2\right]=0\\
        &\left[e^3,e^4\pm\im e^5\right]=\pm\im\left(e^4\pm\im e^5\right)
        &&\left[e^8,e^4\pm\im e^5\right]=\pm\sqrt{3}\im\left(e^4\pm\im e^5\right)\\
        &\left[e^3,e^6\pm\im e^7\right]=\mp\im\left(e^6\pm\im e^7\right)
        &&\left[e^8,e^6\pm\im e^7\right]=\pm\sqrt{3}\im\left(e^6\pm\im e^7\right)
    \end{align*}
    Therefore, $e^1+\im e^2$, $e^4+\im e^5$ and $e^6-\im e^7$ are three positive roots. We shall define 
    \begin{align*}
        &\varphi^1=e^1+\im e^2
        &&\varphi^3=e^6-\im e^7\\
        &\varphi^2=e^4+\im e^5
        &&\varphi^4=(1-a\im)e^3-b\im e^8\;\text{ with } a+\im b \in \H_-
    \end{align*}
    so that $\left< \varphi^1, \varphi^2, \varphi^3, \varphi^4\right>$, generate the Samelson subalgebras on SU(3). Among these the only isotropic Samelson subalgebra is detected by the choice $Je^3 = e^8$ since $\left<e^3,e^8\right>=0$ and they have the same norm. It corresponds to $a+\im b = -\im$ and we indicate it as $J_{0,-1}$.

\subsection{Spin$(5)$}\label{subsec: isotrop cplx structure Sp}
    The group Spin$(5)$ is the double cover of SO$(5)$, hence they share the same Lie algebra, which is given by the $5\times5$ skew-symmetric matrices. We take the following generators as basis of the Spin$(5)$ Lie algebra $\mathfrak{spin}(5)$:
    \begin{align*}
        & e^1 = A_{1,2} && e^2 = A_{1,3} &&& e^3 = A_{2,3} &&&& e^4 = A_{1,4} &&&&& e^5 = A_{2,4} \\
        & e^6 = A_{3,4} && e^7 = A_{1,5} &&& e^8 = A_{2,5} &&&& e^9 = A_{3,5} &&&&& e^{10} = A_{4,5}
    \end{align*}
    where $A_{i,j}$ represents the $5\times5$ skew-symmetric matrix with $1$ in the $(i,j)$-position; more precisely, $\left(A_{i,j}\right)_{p,q}=\delta_{i,p}\delta_{j,q} - \delta_{i,q}\delta_{j,p}$. Using this notation we describe the structure constants of the global left-invariant frame $\left\{e^1,e^2,e^3,e^4,e^4,e^5,e^6,e^7,e^8,e^9,e^{10}\right\}$ on Spin$(5)$ as
    \begin{equation*}
        \left[A_{i,j},A_{m,n}\right] = \delta_{mj} A_{in} - \delta_{nj} A_{im} - \delta_{mi} A_{jn} + \delta_{ni} A_{jm}\;.
    \end{equation*}
    A maximal torus in SO$(5)$ is given by the block-diagonal matrices of the form
    \begin{equation*}
        \begin{pmatrix}
            B_1 & 0 & 0\\
            0 & 0 & 0\\
            0 & 0 & B_2
        \end{pmatrix}\;,
    \end{equation*}
    where $B_1,\,B_2 \in \mbox{SO}(2)$. Thus its Lie algebra $\mathfrak{k}\subset\mathfrak{spin}(5)$ is generated by $e^1$ and $e^{10}$. The remaining eight generators, outside the Cartan subalgebra, could be rearranged into eight roots. Consider the following relations:
    \begin{align*}
        &\left[e^1,e^2\pm\im e^3\right]=\mp\im\left(e^2\pm\im e^3\right)
        &&\left[e^{10},e^2\pm\im e^3\right]=0\\
        &\left[e^1,e^6\pm\im e^9\right]=0
        &&\left[e^{10},e^6\pm\im e^9\right]=\mp\im\left(e^6\pm\im e^9\right)
    \end{align*}
    \begin{align*}
        &\left[e^1,\left(e^4-e^8\right)\pm\im \left(e^7+e^5\right)\right]=\left[e^{10},\left(e^4-e^8\right)\pm\im \left(e^7+e^5\right)\right]=\mp\im\left(\left(e^4-e^8\right)\pm\im \left(e^7+e^5\right)\right)\\
        &\left[e^1,\left(e^4+e^8\right)\pm\im \left(e^7-e^5\right)\right]=-\left[e^{10},\left(e^4+e^8\right)\pm\im \left(e^7-e^5\right)\right]=\pm\im\left(\left(e^4+e^8\right)\pm\im \left(e^7-e^5\right)\right)
    \end{align*}
    Therefore, we have the eight roots $\pm(\im,0),\,\pm(0,\im),\,\pm(\im,\im),\,\pm(\im,-\im)$. A choice of positive roots leads to a Samelson subalgebra of Spin$(5)$ and we can take generators $\left< \varphi^1, \varphi^2, \varphi^3, \varphi^4, \varphi^5 \right>$ defined as
    \begin{align*}
        &\varphi^1 = (1-a\im) e^1 -b \im e^{10}\;\text{ with } a+\im b \in \H_+\cup\H_-
        &&\varphi^2 = e^2 +\im e^3
        &&&\varphi^3 = e^4 +\im e^5\\
        &
        && \varphi^4 = e^7 +\im e^8  &&&\varphi^5 = e^6 +\im e^9
    \end{align*}
    The two isotropic Samelson subalgebras are detected by the choices $a+\im b = \pm\im$ and we indicate the corresponding complex structures with $J_{\pm}$. Indeed, it can be verified that $\left<e^1,e^{10}\right>=0$ and they have the same norm.

\subsection{G$_2$}\label{subsec: isotrop cplx structure G2}
    The group G$_2$ is the simple exceptional Lie group of rank two. Its Lie algebra $\mathfrak{g}_2$ is described by the Dynkin diagram
    \begin{tikzpicture}
        \dynkin G2
    \end{tikzpicture}.
    Hence, we can fix a system of simple roots given by $\{\alpha_1,\alpha_2\}$ with Cartan matrix 
    \begin{equation*}
        \begin{pmatrix}
            2  & -3 \\
            -1 & 2 
        \end{pmatrix}\;,
    \end{equation*}
    Therefore, the positive roots are:
    $$ R^+ = \{\alpha_1,\alpha_2,\alpha_1+\alpha_2,2\alpha_1+\alpha_2,3\alpha_1+\alpha_2,3\alpha_1+2\alpha_2\}\;. $$
    We now construct a basis of $\mathfrak{g}_2^\C$ adapted to the roots system as follows. Take $\varphi_1\in\mathfrak{g}_{2,\alpha_1}, \varphi_2\in\mathfrak{g}_{2,\alpha_2}$ and $\bar\varphi_1\in\mathfrak{g}_{2,-\alpha_1}, \bar\varphi_2\in\mathfrak{g}_{2,-\alpha_2}$. Thanks to the relation $\left[\mathfrak{g}_{2,\alpha},\mathfrak{g}_{2,\beta}\right]\subset\mathfrak{g}_{2,\alpha+\beta}$, we define the eigenvectors of the others roots starting from $\varphi_1,\bar\varphi_1,\varphi_2,\bar\varphi_2$. Namely,
    \begin{align*}
        & \varphi_3 = [\varphi_1,\varphi_2] && \varphi_4 = [\varphi_1,\varphi_3]  &&& \varphi_5 = [\varphi_1,\varphi_4] &&&& \varphi_6 = [\varphi_2,\varphi_5] \\
        & \bar\varphi_3 = [\bar\varphi_1,\bar\varphi_2] && \bar\varphi_4 = [\bar\varphi_1,\bar\varphi_3] &&& \bar\varphi_5 = [\bar\varphi_1,\bar\varphi_4] &&&& \bar\varphi_6 = [\bar\varphi_2,\bar\varphi_5] 
    \end{align*}
    Moreover, the generators of the torus $\mathfrak{k}\in\mathfrak{g}_2$ can be chosen as $h_i = [\varphi_i,\bar\varphi_i]$ for $i = 1,2$.\\
    Notice that $\left[\mathfrak{g}_{2,\alpha},\mathfrak{g}_{2,\beta}\right] = 0$ if $\alpha+\beta$ is not a root. Moreover, using the Jacobi identity we can compute all the non vanishing products, which we summarize in the following Table \ref{tab: g2 algebra}.
    \begin{table}[h!]
        \begin{tabular}{l|llllllllllll}
            $[\cdot,\cdot]$ & $\varphi_1$ & $\bar\varphi_1$  & $\varphi_2$ & $\bar\varphi_2$ & $\varphi_3$ & $\bar\varphi_3$  & $\varphi_4$ & $\bar\varphi_4$ & $\varphi_5$ & $\bar\varphi_5$ & $\varphi_6$ & $\bar\varphi_6$ \\ \hline
            $h_1$ & $2\varphi_1$ & $-2\bar\varphi_1$ & $-3\varphi_2$ & $3\bar\varphi_2$ & $-\varphi_3$ & $\bar\varphi_3$ & $\varphi_4$ & $-\bar\varphi_4$ & $3\varphi_5$ & $-3\bar\varphi_5$ & $0$ & $0$ \\
            $h_2$  & $-\varphi_1$ & $\bar\varphi_1$ & $2\varphi_2$ & $-2\bar\varphi_2$ & $\varphi_3$ & $-\bar\varphi_3$ & $0$ & $0$ & $-\varphi_5$ & $\bar\varphi_5$ & $\varphi_6$ & $-\bar\varphi_6$ \\
            $\varphi_1$ & & $h_1$ & $\varphi_3$ & $0$ & $\varphi_4$ & $3\bar\varphi_2$ & $\varphi_5$ & $4\bar\varphi_3$ & $0$ & $3\bar\varphi_4$ & $0$ & $0$ \\
            $\bar\varphi_1$ & & & $0$ & $\bar\varphi_3$ & $3\varphi_2$ & $\bar\varphi_4$ & $4\varphi_3$ & $\bar\varphi_5$ & $3\varphi_4$ & $0$ & $0$ & $0$ \\
            $\varphi_2$ & & & & $h_2$ & $0$ & $-\bar\varphi_1$ & $0$ & $0$ & $\varphi_6$ & $0$ & $0$ & $\bar\varphi_5$ \\
            $\bar\varphi_2$ & & & & & $-\varphi_1$ & $0$ & $0$ & $0$ & $0$ & $\bar\varphi_6$ & $\varphi_5$ & $0$ \\
            $\varphi_3$ & & & & & & $-h_1$   & $-\varphi_6$ & $4\bar\varphi_1$ & $0$ & $0$ & $0$ & $3\bar\varphi_4$ \\
                        & & & & & & $- 3h_2$ & & & & & &\\
            $\bar\varphi_3$ & & & & & & & $4\varphi_1$ & $-\bar\varphi_6$ & $0$ & $0$ & $3\varphi_4$ & $0$ \\
            $\varphi_4$ & & & & & & & & $8h_1$ & $0$ & $-12\bar\varphi_1$ & $0$ & $12\bar\varphi_3$ \\
                        & & & & & & & & $+ 12h_2$& & & & \\
            $\bar\varphi_4$ & & & & & & & & & $-12\varphi_1$ & $0$ & $12\varphi_3$ & $0$ \\
            $\varphi_5$ & & & & & & & & & & $-36h_1$ & $0$ & $36\bar\varphi_2$ \\
                        & & & & & & & & & & $-36h_2$ & &\\
            $\bar\varphi_5$ & & & & & & & & & & & $36\varphi_2$ & $0$ \\
            $\varphi_6$ & & & & & & & & & & & & $36h_1 $ \\
                        & & & & & & & & & & & & $+ 72h_2$
        \end{tabular}
        \caption{Algebra structure of $\mathfrak{g}_2$}
        \label{tab: g2 algebra}
    \end{table}
    
    Finally, it remains to assign a complex structure on the torus to describe the Samelson subalgebras of $\mathfrak{g}_2$. Namely, these are generated by $\left< \varphi^1, \varphi^2, \varphi^3, \varphi^4, \varphi^5, \varphi^6, \varphi^7\right>$, where $\varphi_7 = (1-a\im)h_1 - b\im h_2$ with $a+\im b \in \H_+ \cup \H_-$. Since, $\|\alpha_1\|^2 = 3 \|\alpha_2\|^2$ and $\left<h_1,h_2\right>=-\frac{1}{2}\|h_1\|^2$ the two isotropic complex structures are given by $Jh_1 = \pm\sqrt{3}(h_1 + 2h_2)$, that is $$\varphi_7 = \left(1\mp\sqrt{3}\im\right)h_1 \mp 2\sqrt{3}\im h_2 \;.$$
    We indicate these complex structures with $J_{\pm}$.

\section{Bott Chern cohomology of compact simply-connected simple Lie groups of rank two}\label{sec: computations}
    We now compute case by case the $(1,1)$-Aeppli cohomology of the compact simply-connected simple Lie groups of rank two in order to apply Theorem \ref{thm: GJS convergence}.
    
    We will use the following notation: $\varphi^{i\bar j} = \varphi^i\wedge\bar\varphi^j$.

\subsection{SU$(3)$}\label{subsec: computations SU}
    We consider the Hermitian pluriclosed manifold $\left(\mbox{SU}(3),J_{0,-1},\omega_{BF}\right)$ where $J_{0,-1}$ is the isotropic left-invariant complex structure as given in Section \ref{subsec: isotrop cplx structure SU} and $\omega_{BF}$ represents the Hermitian metric coming from the Killing form; more precisely it is 
    \begin{equation}\label{eq: kill metric}
        \omega_{BF} := \frac{\im}{2} \sum_{k=1}^4 \varphi^{k}\wedge\bar\varphi^{ k} \;.
    \end{equation}
    
    By computing the complex structure equations, we obtain
    $$\left\{\begin{array}{rcl}
            \p \varphi^1 &=& -\im\, \varphi^{14} + \im\, \varphi^{23} \\
            \p \varphi^2 &=& -\frac{1}{2}\left(\sqrt{3} + \im \right) \varphi^{24} \\
            \p \varphi^3 &=& \frac{1}{2}\left(\sqrt{3} - \im \right) \varphi^{34} \\
            \p \varphi^4 &=& 0
           \end{array}\right.
     \;\text{ and }\; 
     \left\{\begin{array}{rcl}
            \bar\p \varphi^1 &=& -\im\, \varphi^{1\bar4} \\
            \bar\p \varphi^2 &=& \im\, \varphi^{1\bar3} + \frac{1}{2}\left(\sqrt{3} - \im \right) \varphi^{2\bar4} \\
            \bar\p \varphi^3 &=& - \im\, \varphi^{1\bar2} - \frac{1}{2}\left(\sqrt{3} + \im \right) \varphi^{3\bar4} \\
            \bar\p \varphi^4 &=& \im\, \varphi^{1\bar1} + \frac{1}{2}\left(-\sqrt{3} + \im \right) \varphi^{2\bar2} + \frac{1}{2}\left(\sqrt{3} + \im \right) \varphi^{3\bar3}
           \end{array}\right. 
    $$
    In \cite{Pit}, a model for the Dolbeault cohomology of compact simply-connected simple Lie groups of rank two is given, see (\ref{eq: model cohom}). In particular, when SU(3) is equipped with its isotropic left-invariant complex structure it holds
    \begin{equation}\label{eq: model Dolb cohom}
        H_{\bar\p}^{\bullet,\bullet}\left(\mbox{SU}(3)\right) \;\simeq\; \left. \C\left[y_{1,1}\right] \middle\slash \left(\left(y_{1,1}\right)^{3}\right) \right. \otimes \wedge^{\bullet,\bullet} \left( \C\left\langle [u_{2,1}] \right\rangle \oplus \C\left\langle [x_{0,1}] \right\rangle \right) \;,
    \end{equation}
    where subscripts denote bi-degree.
    By this presentation we recover that the Hodge numbers are
    $$
        \begin{array}{ccccccccc}
        & & & & 1 & & & & \\
        & & & 0 & & 1 & & & \\
        & & 0 & & 1 & & 0 & & \\
        & 0 & & 1 & & 1 & & 0 & \\
        0 & & 0 & & 2 & & 0 & & 0 \\
        & 0 & & 1 & & 1 & & 0 & \\
        & & 0 & & 1 & & 0 & &   \\
        & & & 1 & & 0 & & &  \\
        & & & & 1 & & & & \\
    \end{array} 
    $$
    Consider the sub-complex of left-invariant forms
    $$ \iota \colon \bigwedge \left\langle \varphi^1,\, \varphi^2,\, \varphi^3,\, \varphi^4,\, \bar\varphi^1,\, \bar\varphi^2,\, \bar\varphi^3,\, \bar\varphi^4 \right\rangle \hookrightarrow \wedge^{\bullet,\bullet}\mbox{SU}(3) \;. $$
    Since it is closed for the $\C$-linear Hodge-$*$-operator $*_{g_{BF}} \colon \wedge^{\bullet_1,\bullet_2}X \to \wedge^{n-\bullet_2,n-\bullet_1}X$ associated to $g_{BF}$, $H_{\bar\p}(\iota)$ is injective, see Theorem 1.6 and Remark 1.9 of \cite{AK17}. By knowing the Hodge numbers, we want to prove that it is also surjective. Thanks to structure of the model (\ref{eq: model Dolb cohom}) it is enough to check that the sub-complex cohomology has the same dimension as the complex in bi-degree $(0,1)$, $(1,1)$ and $(2,1)$. That is, to verify that the sub-complex has cohomologies $H^{0,1}_{\bar\p}(\mbox{SU}(3))_{inv}$, $H^{1,1}_{\bar\p}(\mbox{SU}(3))_{inv}$ and $H^{2,1}_{\bar\p}(\mbox{SU}(3))_{inv}$ of dimension one. We shall highlight that we chose the sub-complex of the left-invariant forms to exploit its algebraic structure. Indeed, the Pittie model induces the following sub-complex
    $$ \iota' \colon \bigwedge \left\langle x_{0,1},\, \bar{x_{0,1}},\, y_{1,1},\,\bar{y_{1,1}},\,u_{2,1},\,\bar{u_{2,1}} \right\rangle \hookrightarrow \wedge^{\bullet,\bullet}\mbox{SU}(3) \;,$$
    which, by construction, verifies $H_{\bar\p}(\iota')$ surjective. However, it is not trivial to prove that it closed for the $\C$-linear Hodge-$*$-operator associated to any metric, and thus it is difficult to prove that $H_{\bar\p}(\iota')$ is also injective.
    
    A direct computation leads to the following conditions
    \begin{align*}
        H^{0,1}_{\bar\p}(\mbox{SU}(3))_{inv} &= \C\left< \left[\bar\varphi^4\right] \right> \;;\\
        H^{1,1}_{\bar\p}(\mbox{SU}(3))_{inv} &= \C\left< \left[\varphi^{1\bar1}+\varphi^{2\bar2}\right] \right>\;;\\
        H^{2,1}_{\bar\p}(\mbox{SU}(3))_{inv} &= \C\left< \left[2\varphi^{14\bar1}-2\varphi^{23\bar1} + (1-\sqrt{3}\im)\varphi^{24\bar2} + (1+\sqrt{3}\im)\varphi^{34\bar3}\right] \right> \;,
    \end{align*}
    Therefore, $H_{\bar\p}(\iota)$ is an isomorphism and the formal representative $x_{0,1},\,y_{1,1}$ and $u_{2,1}$ of Pittie's model are respectively in the left-invariant classes $$\left[\bar\varphi^4\right],\,\left[\varphi^{1\bar1}+\varphi^{2\bar2}\right] \text{ and } \left[2\varphi^{14\bar1}-2\varphi^{23\bar1} + (1-\sqrt{3}\im)\varphi^{24\bar2} + (1+\sqrt{3}\im)\varphi^{34\bar3}\right]\;.$$
    In particular, we get that
    \begin{eqnarray*}
        H^{\bullet,\bullet}_{\bar\p}\left(\mbox{SU}(3)\right) &=& 
        \C\left\langle 1 \right\rangle 
        \oplus \C\left\langle \left[\varphi^{\bar{4}}\right] \right\rangle 
        \oplus \C\left\langle \left[ \varphi^{1\bar1}+\varphi^{2\bar2} \right] \right\rangle \\
        && \oplus\, \C\left\langle \left[ 2\varphi^{14\bar1}-2\varphi^{23\bar1} + (1-\sqrt{3}\im)\varphi^{24\bar2} + (1+\sqrt{3}\im)\varphi^{34\bar3} \right] \right\rangle 
        \oplus\, \C\left\langle \left[\varphi^{1\bar1\bar4}+\varphi^{2\bar2\bar4}\right] \right\rangle \\
        && \oplus \C\left\langle \left[2\varphi^{14\bar1\bar4}-2\varphi^{23\bar1\bar4} + (1-\sqrt{3}\im)\varphi^{24\bar2\bar4} + (1+\sqrt{3}\im)\varphi^{34\bar3\bar4}\right],\; \left[ \varphi^{12\bar1\bar2} \right] \right\rangle \\
        && \oplus \C\left\langle \left[(3-\sqrt{3}\im)\varphi^{124\bar1\bar2} + (1+\sqrt{3}\im) \varphi^{134\bar1\bar3} + (1+\sqrt{3}\im)\varphi^{234\bar2\bar3}\right] \right\rangle 
        \oplus \C\left\langle \left[\varphi^{12\bar1\bar2\bar4} \right] \right\rangle \\
        && \oplus \C\left\langle \left[(3-\sqrt{3}\im)\varphi^{124\bar1\bar2\bar4} + (1+\sqrt{3}\im) \varphi^{134\bar1\bar3\bar4} + (1+\sqrt{3}\im)\varphi^{234\bar2\bar3\bar4} \right] \right\rangle \\
        && \oplus \C\left\langle \left[\varphi^{1234\bar1\bar2\bar3}\right] \right\rangle \oplus \C\left\langle \left[\varphi^{1234\bar1\bar2\bar3\bar4}\right] \right\rangle \;.
    \end{eqnarray*}

    By Theorem 1.3 and Proposition 2.2 in \cite{AK17}, we also have that $H_{BC}(\iota)$ is an isomorphism. In particular, the Bott--Chern cohomology arises from just the left-invariant forms and a direct computation leads to 
    \begin{eqnarray*}
        H^{\bullet,\bullet}_{BC}\left(\mbox{SU}(3)\right) &=& \C\left\langle 1 \right\rangle \oplus \C\left\langle \left[\varphi^{1\bar1} + \varphi^{2\bar2} \right], \left[\varphi^{2\bar2} - \varphi^{3\bar3} \right] \right\rangle \\
        &&\oplus\, \C\left\langle \left[ 2\varphi^{14\bar1}-2\varphi^{23\bar1} + (1-\sqrt{3}\im)\varphi^{24\bar2} + (1+\sqrt{3}\im)\varphi^{34\bar3} \right] \right\rangle \\
        &&\oplus\, \C\left\langle \left[ 2\varphi^{1\bar1\bar4}-2\varphi^{1\bar2\bar3} + (1+\sqrt{3}\im)\varphi^{2\bar2\bar4} + (1-\sqrt{3}\im)\varphi^{3\bar3\bar4} \right] \right\rangle \oplus \C\left\langle \left[\varphi^{12\bar1\bar2}\right],\left[\varphi^{13\bar1\bar3}\right] \right\rangle \\
        && \oplus \C\left\langle \left[2\varphi^{124\bar1\bar2}+(\sqrt{3}\im-1)\varphi^{234\bar2\bar3}\right] \right\rangle \oplus \C\left\langle \left[2\varphi^{12\bar1\bar2\bar4}-(\sqrt{3}\im+1)\varphi^{23\bar2\bar3\bar4}\right] \right\rangle \\
        && \oplus \C\left\langle \left[ \varphi^{123\bar1\bar2\bar3}\right] \right\rangle \oplus \C\left\langle \left[\varphi^{1234\bar1\bar2\bar3}\right] \right\rangle \oplus \C\left\langle \left[\varphi^{123\bar1\bar2\bar3\bar4}\right] \right\rangle \oplus  \C\left\langle \left[\varphi^{1234\bar1\bar2\bar3\bar4}\right] \right\rangle \;.
    \end{eqnarray*}
    Hence, the Bott-Chern numbers are
    $$
        \begin{array}{ccccccccc}
        & & & & 1 & & & & \\
        & & & 0 & & 0 & & & \\
        & & 0 & & 2 & & 0 & & \\
        & 0 & & 1 & & 1 & & 0 & \\
        0 & & 0 & & 2 & & 0 & & 0 \\
        & 0 & & 1 & & 1 & & 0 & \\
        & & 0 & & 1 & & 0 & &   \\
        & & & 1 & & 1 & & &  \\
        & & & & 1 & & & & \\
    \end{array} 
    $$
    We know that for a Hermitian metric $g$ on a complex manifold $X$, the $\C$-linear Hodge-$*$-operator induces the isomorphism
    $$ *_{g_{BF}} \colon H^{\bullet_1,\bullet_2}_{BC}(X) \stackrel{\simeq}{\to} H^{n-\bullet_2,n-\bullet_1}_{A}(X) \;. $$
    Hence, as a corollary, we get that $\dim H^{1,1}_{A}\left(\mbox{SU}(3)\right) = 1$. In particular, we have that
    $$ H^{1,1}_A\left(\mbox{SU}(3)\right) = \C\left<\left[\varphi^{1\bar1}+\varphi^{2\bar2}+\varphi^{3\bar3}+\varphi^{4\bar4}\right]\right> = \C \left<\left[ \omega_{BF} \right]\right>\;, $$
    thus thanks to Theorem \ref{thm: GJS convergence} the following result holds.
    \begin{thm}\label{thm: mean theorem1}
        Consider $\left(\mbox{SU}(3),J_{0,-1},\omega_{BF}\right)$ where $\omega_{BF}$ is the metric given by the Killing form (as in (\ref{eq: kill metric})) and $J_{0,-1}$ is the isotropic left-invariant complex structure defined in Section \ref{subsec: isotrop cplx structure SU}. Given any pluriclosed metric $\omega_0$ on $\left(\mbox{SU}(3),J_{0,-1}\right)$ the solution to the pluriclosed flow with initial data $\omega_0$ exists on $[0, \infty)$ and converges to a Bismut flat metric $\omega_\infty$. In particular, there exists a positive $\lambda$ such that $[\omega_0] = \lambda [\omega_{BF}]$ in $H^{1,1}_A \left(\mbox{SU}(3)\right)$ and $\omega_\infty = \lambda\,\omega_{BF}$. 
    \end{thm}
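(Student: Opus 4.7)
The plan is to deduce Theorem \ref{thm: mean theorem1} as a direct application of Theorem \ref{thm: GJS convergence}, now that the computation $H^{1,1}_A(\mbox{SU}(3)) = \C\left\langle [\omega_{BF}] \right\rangle$ has been carried out. I would follow the same template used in the introduction for the Hopf surface and the Calabi--Eckmann manifold: first identify the Aeppli class of an arbitrary pluriclosed metric, then promote this identification to the $\bar\partial$-torsion class in $H^{2,1}_{\bar\partial}$, and finally feed the data into Garcia-Fernandez--Jordan--Streets' convergence theorem.

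Concretely, given any pluriclosed metric $\omega_0$ on $(\mbox{SU}(3), J_{0,-1})$, I would first argue $[\omega_0] \neq 0$ in $H^{1,1}_A$ by a positivity and integration argument: through the non-degenerate duality pairing $H^{1,1}_A \otimes H^{3,3}_{BC} \to \C$ induced by integration, the class $[\omega_0]$ pairs with the Bott--Chern dual of $[\omega_{BF}]$ to a positive multiple of $\int_{\mbox{SU}(3)} \omega_0 \wedge \omega_{BF}^{3}$, which is strictly positive because $\omega_0$ and $\omega_{BF}^{3}$ are both positive forms. Since $H^{1,1}_A(\mbox{SU}(3))$ is one-dimensional with generator $[\omega_{BF}]$, this forces $[\omega_0] = \lambda[\omega_{BF}]$ for a unique $\lambda > 0$. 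Applying the natural map $\partial\colon H^{1,1}_A \to H^{2,1}_{\bar\partial}$ from the introduction then yields
\begin{equation*}
[\partial\omega_0] \;=\; \lambda[\partial\omega_{BF}] \;=\; [\partial(\lambda\,\omega_{BF})] \quad \text{in } H^{2,1}_{\bar\partial}(\mbox{SU}(3)).
\end{equation*}
Since $\lambda\,\omega_{BF}$ is itself a Bismut flat Hermitian metric on $(\mbox{SU}(3), J_{0,-1})$, Theorem \ref{thm: GJS convergence} applies with reference $\lambda\,\omega_{BF}$ and initial datum $\omega_0$, giving long-time existence of the pluriclosed flow and convergence to a Bismut flat limit $\omega_\infty$.

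The one point I expect to require a bit more care is the final identification $\omega_\infty = \lambda\,\omega_{BF}$, rather than merely convergence to some Bismut flat metric in the same torsion class. The pluriclosed flow preserves the $\bar\partial$-torsion class $[\partial\omega]$, so $[\partial\omega_\infty] = \lambda[\partial\omega_{BF}]$; combined with the fact that $\omega_\infty$ is again a Bismut flat Hermitian structure on the Samelson space $(\mbox{SU}(3), J_{0,-1})$ and with the rigidity of such metrics, which form the ray $\mathbb{R}_{>0}\cdot\omega_{BF}$ inside the one-dimensional Aeppli line, one concludes that the scalar must be exactly $\lambda$. This rigidity step---essentially the uniqueness, up to scale, of the Bismut flat metric compatible with the Samelson structure on a simply-connected rank-two simple Lie group---is the only nonformal ingredient; everything else in the argument is a direct cohomological consequence of the $H^{1,1}_A$-computation performed above.
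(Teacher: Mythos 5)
Your overall strategy coincides with the paper's: both deduce the theorem from Theorem \ref{thm: GJS convergence} once $H^{1,1}_A(\mbox{SU}(3))=\C\langle[\omega_{BF}]\rangle$ is known, the only real work being (i) showing $[\omega_0]=\lambda[\omega_{BF}]$ with $\lambda>0$ and (ii) pinning down $\omega_\infty$. However, your argument for (i) has a genuine gap. You compute the Aeppli--Bott--Chern pairing of $[\omega_0]$ against ``the Bott--Chern dual of $[\omega_{BF}]$'' as a positive multiple of $\int_{\mathrm{SU}(3)}\omega_0\wedge\omega_{BF}^{3}$. For that integral to be well defined on the Aeppli class of $\omega_0$ you need $\partial\omega_{BF}^{3}=\bar\partial\omega_{BF}^{3}=0$, i.e.\ $\omega_{BF}$ balanced; but $\omega_{BF}$ is pluriclosed and not K\"ahler (SU$(3)$ has $b_2=0$), and a metric that is simultaneously SKT and balanced is K\"ahler, so $\partial(\omega_{BF}^{3})=3\,\omega_{BF}^{2}\wedge\partial\omega_{BF}\neq 0$. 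Consequently $\int\omega_0\wedge\omega_{BF}^{3}$ changes when $\omega_0$ is replaced by $\omega_0+\partial u+\bar\partial v$, and it does not compute the duality pairing. The step is repairable: pair instead with the actual generator $\left[\varphi^{123\bar1\bar2\bar3}\right]$ of $H^{3,3}_{BC}$ produced in Section \ref{subsec: computations SU}, which is $\partial$- and $\bar\partial$-closed and (suitably normalized) a simple positive $(3,3)$-form, so its pairing with any positive $(1,1)$-form is strictly positive. The paper takes a different and more elementary route: for $J_{0,-1}$ the maximal torus $\mathbb{T}^2$ is a compact complex curve, integration of $(1,1)$-forms over it descends to $H^{1,1}_A$ by Stokes, and both $\omega_0$ and $\omega_{BF}$ integrate positively, which gives $\lambda>0$ directly.

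For step (ii), you invoke ``rigidity of Bismut flat metrics'' as a black box; the paper actually proves it (Bismut flat metrics on the Samelson space are bi-invariant, an invariant symmetric bilinear form on a simple Lie algebra is a multiple of the Killing form, and Lemma \ref{lem: Milnor} excludes other Lie group structures), so you should supply that argument rather than cite it as known. Also, your identification of the scalar goes through $[\partial\omega_\infty]=\lambda[\partial\omega_{BF}]$ and therefore needs $[\partial\omega_{BF}]\neq 0$ in $H^{2,1}_{\bar\partial}$; this is true (a direct computation shows $\partial\omega_{BF}$ is $-\tfrac14$ times the listed generator of $H^{2,1}_{\bar\partial}(\mbox{SU}(3))_{inv}$) but must be checked, whereas the paper avoids it by using that $\omega_\infty$ lies in the nonzero Aeppli class $\lambda[\omega_{BF}]$.
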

    \begin{proof}
        First of all, we notice that the complex structure $J_{0,-1}$ is such that the maximal torus $\mathbb{T}^2$ in $\mbox{SU}(3)$ is a complex submanifold.
        Indeed, following the Samelson's construction, $J_{0,-1}$ has been defined firstly by choosing a complex structure on the maximal torus and then completing it to a complex structure of $\mbox{SU}(3)$.
        
        Now, given any pluriclosed metric $\omega_0$ on $\left(\mbox{SU}(3),J_{0,-1}\right)$, integrating over the maximal torus $\mathbb{T}^2$ we see that $[\omega_0]\neq 0$ in $H^{1,1}_A\left(\mbox{SU}(3)\right)$. Since the $(1,1)$-Aeppli cohomology of SU(3) is generated by $[\omega_{BF}]$, there exists a constant $\lambda$ such that $[\omega_0] = \lambda [\omega_{BF}]$ in $H^{1,1}_A \left(\mbox{SU}(3)\right)$, which must be positive cause the integrals of $\omega_0$ and $\omega_{BF}$ on $\mathbb{T}^2$ are both positive. Thus, Theorem \ref{thm: GJS convergence} applies ensuring long-time existence of the pluriclosed flow with initial data $\omega_0$ and convergence to a Bismut flat metric $\omega_\infty \in \lambda[\omega_{BF}]$.
        
        We have seen that Bismut flat metrics are bi-invariant and it is well known that any invariant symmetric bi-linear form on a simple Lie group must be a multiple of the Killing form. Thanks to the Milnor result (Lemma \ref{lem: Milnor}) we know that there is only one Lie group structure on SU(3) (as manifold) which may admit a bi-invariant metric. Thus $\omega_\infty$ must be a positive multiple of $\omega_{BF}$, and hence $\omega_\infty = \lambda\,\omega_{BF}$.
    \end{proof}

\subsection{Spin$(5)$}\label{subsec: computations Spin}
    We now consider the Hermitian pluriclosed manifolds $\left(\mbox{Spin}(5),J_\pm,\omega_{BF}\right)$ where $J_\pm$ are the only two isotropic left-invariant complex structure on Spin$(5)$ as described in Section \ref{subsec: isotrop cplx structure Sp} and $\omega_{BF}$ represents the Hermitian metric coming from the Killing form, which is 
    \begin{equation}\label{eq: kill metric Sp}
        \omega_{BF} := \frac{\im}{2} \sum_{k=1}^5 \varphi^k\wedge\bar\varphi^k \;.
    \end{equation}
    
    By computing the complex structure equations, we obtain
    $$\left\{\begin{array}{rcl}
            \p \varphi^1 &=& 0 \\
            \p \varphi^2 &=& \im\varphi^{12} - \varphi^{35} -\im\varphi^{45} \\
            \p \varphi^3 &=& \im\varphi^{13} \pm \im \varphi^{14} +\varphi^{25}\\
            \p \varphi^4 &=& \mp\im\varphi^{13} + \im \varphi^{14} + \im\varphi^{25}\\
            \p \varphi^5 &=& \mp\varphi^{15} 
           \end{array}\right.
     \;\text{ and }\; 
     \left\{\begin{array}{rcl}
            \bar\p \varphi^1 &=& \im\varphi^{2\bar2} + \im\varphi^{3\bar3} \pm\im\varphi^{3\bar4} \mp\im\varphi^{4\bar3} + \im\varphi^{4\bar4} \pm\varphi^{5\bar5} \\
            \bar\p \varphi^2 &=& -\im\varphi^{2\bar1} -\varphi^{3\bar5} +\im\varphi^{4\bar5} \\
            \bar\p \varphi^3 &=& \varphi^{2\bar5} - \im\varphi^{3\bar1} \pm\im\varphi^{4\bar1} \\
            \bar\p \varphi^4 &=& -\im\varphi^{2\bar5} \mp\im\varphi^{3\bar1} -\im\varphi^{4\bar1} \\
            \bar\p \varphi^5 &=& -\varphi^{2\bar3} +\im\varphi^{2\bar4} + \varphi^{3\bar2} -\im\varphi^{4\bar2} \mp\varphi^{5\bar1} 
           \end{array}\right. 
    $$
    where $\pm$ depend on the choice of the complex structure $J_+$ or $J_-$.
    
    We use again the Pittie model for the Dolbeault cohomology of compact simply-connected simple Lie groups of rank two, \cite{Pit}. In particular, when Spin$(5)$ is equipped with a isotropic left-invariant complex structure, (\ref{eq: model cohom}) gives
    \begin{equation*}
        H_{\bar\p}^{\bullet,\bullet}\left(\mbox{Spin}(5)\right) \;\simeq\; \left. \C\left[y_{1,1}\right] \middle\slash \left(\left(y_{1,1}\right)^{4}\right) \right. \otimes \wedge^{\bullet,\bullet} \left( \C\left\langle [u_{2,1}] \right\rangle \oplus \C\left\langle [x_{0,1}] \right\rangle \right) \;,
    \end{equation*}
    and we recover the Hodge diamond
     $$
        \begin{array}{ccccccccccc}
        & & & & & 1 & & & & &  \\
        & & & & 0 & & 1 & & & & \\
        & & & 0 & & 1 & & 0 & & & \\
        & & 0 & & 1 & & 1 & & 0 & & \\
        & 0 & & 0 & & 2 & & 0 & & 0 & \\
        0 & & 0 & & 1 & & 1 & & 0 & & 0 \\
        & 0 & & 0 & & 2 & & 0 & & 0 & \\
        & & 0 &  & 1 & & 1 & & 0 & & \\
        & & & 0 & & 1 & & 0 & & & \\
        & & & & 1 & & 0 & & & & \\
        & & & & & 1 & & & & & 
    \end{array} 
    $$
    Similarly to the previous case, we consider the sub-complex of left-invariant forms
    $$ \iota \colon \bigwedge \left\langle \varphi^1,\, \varphi^2,\, \varphi^3,\, \varphi^4,\, \varphi^5, \, \bar\varphi^1,\, \bar\varphi^2,\, \bar\varphi^3,\, \bar\varphi^4,\,\bar\varphi^5 \right\rangle \hookrightarrow \wedge^{\bullet,\bullet}\mbox{Spin}(5) \;, $$
    and we check that $H_{\bar\p}(\iota)$ is surjective in bi-degree $(0,1)$, $(1,1)$ and $(2,1)$ for both $J_+$ and $J_-$. More precisely, we verify that the sub-complex has cohomologies $H^{0,1}_{\bar\p}(\mbox{Spin}(5))_{inv}$, $H^{1,1}_{\bar\p}(\mbox{Spin}(5))_{inv}$ and $H^{2,1}_{\bar\p}(\mbox{Spin}(5))_{inv}$ of dimension one generated respectively by $$\left[\bar\varphi^1\right],\left[\varphi^{2\bar2} + \varphi^{3\bar3} + \varphi^{4\bar4}\right], \left[\varphi^{12\bar2} + \varphi^{13\bar3} \mp\varphi^{13\bar4} \pm\varphi^{14\bar3} + \varphi^{14\bar4} \pm\im\varphi^{15\bar5} -\im\varphi^{25\bar3} +\varphi^{25\bar4} +\im\varphi^{35\bar2} - \varphi^{45\bar2}\right]\;.$$
    
    Therefore, $H_{\bar\p}(\iota)$ is an isomorphism and we get that the Dolbeaut cohomology ring is
    \begin{eqnarray*}
        H^{\bullet,\bullet}_{\bar\p}(\mbox{Spin}(5)) &=& 
        \C\left\langle 1 \right\rangle 
        \oplus \C\left\langle \left[\varphi^{\bar{1}}\right] \right\rangle 
        \oplus \C\left\langle \left[ \varphi^{2\bar2}+\varphi^{3\bar3}+\varphi^{4\bar4} \right] \right\rangle \oplus\, \C\left\langle \left[\varphi^{2\bar1\bar2}+\varphi^{3\bar1\bar3}+\varphi^{4\bar1\bar4}\right] \right\rangle \\
        && \oplus\, \C\left\langle \left[\varphi^{12\bar2} + \varphi^{13\bar3} \mp\varphi^{13\bar4} \pm\varphi^{14\bar3} + \varphi^{14\bar4} \pm\im\varphi^{15\bar5} -\im\varphi^{25\bar3} +\varphi^{25\bar4} +\im\varphi^{35\bar2} - \varphi^{45\bar2} \right] \right\rangle 
         \\
        && \oplus \C\left\langle \left[ \varphi^{12\bar1\bar2} + \varphi^{13\bar1\bar3} \mp\varphi^{13\bar1\bar4} \pm\varphi^{14\bar1\bar3} + \varphi^{14\bar1\bar4} \pm\im\varphi^{15\bar1\bar5} -\im\varphi^{25\bar1\bar3} +\varphi^{25\bar1\bar4} +\im\varphi^{35\bar1\bar2} \right.\right.\\
        && \left.\left. - \varphi^{45\bar1\bar2} \right],\,\left[ \varphi^{23\bar2\bar3} + \varphi^{24\bar2\bar4} + \varphi^{34\bar3\bar4} \right] \right\rangle \oplus \C\left\langle \left[ 2\varphi^{123\bar{23}} \mp\varphi^{123\bar{24}} \pm \varphi^{124\bar{23}} + 2\varphi^{124\bar{24}} \right.\right.\\
        && \left.\left. \pm \varphi^{125\bar{25}} + 2\varphi^{134\bar{34}} \pm \varphi^{135\bar{35}} \pm \varphi^{145\bar{45}} + \varphi^{235\bar{34}} + \im\varphi^{245\bar{34}} - \varphi^{345\bar{23}} - \im \varphi^{345\bar{24}}
        \right] \right\rangle 
         \\
        && \oplus \C\left\langle \left[ \varphi^{23\bar1\bar2\bar3} + \varphi^{24\bar1\bar2\bar4} + \varphi^{34\bar1\bar3\bar4} \right] \right\rangle \oplus \C\left\langle \left[ \varphi^{234\bar{234}} \right]\right\rangle\\
        && \oplus \C\left\langle \left[ 3\im\varphi^{1234\bar{234}} \mp \varphi^{1235\bar{235}} \mp \varphi^{1245\bar{245}} \mp \varphi^{1345\bar{345}} \right] \right\rangle \oplus \C\left\langle \left[ \varphi^{234\bar{1234}} \right]\right\rangle  \\
        && \oplus \C\left\langle \left[ 3\im\varphi^{1234\bar{1234}} \mp \varphi^{1235\bar{1235}} \mp \varphi^{1245\bar{1245}} \mp \varphi^{1345\bar{1345}} \right] \right\rangle \\
        && \oplus \C\left\langle \left[\varphi^{12345\bar{2345}}\right] \right\rangle \oplus \C\left\langle \left[\varphi^{12345\bar1\bar2\bar3\bar4\bar5}\right] \right\rangle \;,
    \end{eqnarray*}

    As before, applying Theorem 1.3 and Proposition 2.2 of \cite{AK17} we obtain that $H_{BC}(\iota)$ is an isomorphism. In particular, the Bott--Chern cohomology arises from just the left-invariant forms as well as the Aeppli cohomology. A direct computation shows that $\dim H^{1,1}_{A}\left(\mbox{Spin}(5)\right) \simeq \C$ and it is generated by the class of $\omega_{BF}$, that is
    $$ H^{1,1}_A\left(\mbox{Spin}(5)\right) = \C\left<\left[\varphi^{1\bar1}+\varphi^{2\bar2}+\varphi^{3\bar3}+\varphi^{4\bar4}+\varphi^{5\bar5}\right]\right> = \C \left<\left[ \omega_{BF} \right]\right>\;, $$
    The arguments of Theorem \ref{thm: mean theorem1} hold also in this case. In particular, Theorem \ref{thm: GJS convergence} applies ensuring the global stability of the pluriclosed flow on $\left(\mbox{Spin}(5),J_\pm\right)$.
    \begin{thm}\label{thm: mean theorem2}
        Consider $\left(\mbox{Spin}(5),\omega_{BF},J_{\pm}\right)$ where $\omega_{BF}$ is the metric given by the Killing form (as in (\ref{eq: kill metric Sp})) and $J_\pm$ are the isotropic left-invariant complex structure defined in Section \ref{subsec: isotrop cplx structure Sp}. Given any pluriclosed metric $\omega_0$ on $\left(\mbox{Spin}(5),J_\pm\right)$ the solution to the pluriclosed flow with initial data $\omega_0$ exists on $[0, \infty)$ and converges to a Bismut flat metric $\omega_\infty$. In particular, there exists a positive $\lambda$ such that $[\omega_0] = \lambda [\omega_{BF}]$ in $H^{1,1}_A \left(\mbox{Spin}(5)\right)$ and $\omega_\infty = \lambda\,\omega_{BF}$. 
    \end{thm}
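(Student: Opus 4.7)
The plan is to mirror the argument used in the proof of Theorem \ref{thm: mean theorem1}, exploiting the computation of $H^{1,1}_A(\mbox{Spin}(5))$ performed just above the statement, together with Theorem \ref{thm: GJS convergence}. The main ingredients are already in place: the Bismut flat structure $(J_{\pm},\omega_{BF})$ on $\mbox{Spin}(5)$, the one-dimensionality of $H^{1,1}_A(\mbox{Spin}(5))$ with generator $[\omega_{BF}]$, and the observation that by Samelson's construction of $J_{\pm}$ the maximal torus $\mathbb{T}^2 \subset \mbox{Spin}(5)$ is a complex submanifold (since $J_{\pm}$ is obtained by first choosing a complex structure on $\mathfrak{k}$ and then completing it via the positive-root decomposition).

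The first step is to verify positivity of the Aeppli class of an arbitrary pluriclosed initial datum. Given $\omega_0$ a pluriclosed metric on $(\mbox{Spin}(5),J_\pm)$, integrating over the maximal torus $\mathbb{T}^2$, which is a complex submanifold, yields $\int_{\mathbb{T}^2}\omega_0 > 0$, so $[\omega_0]\neq 0$ in $H^{1,1}_A(\mbox{Spin}(5))$. Since $H^{1,1}_A(\mbox{Spin}(5)) = \mathbb{C}\langle[\omega_{BF}]\rangle$, there is a unique $\lambda \in \mathbb{C}$ with $[\omega_0] = \lambda[\omega_{BF}]$; comparing the positive integrals of $\omega_0$ and $\omega_{BF}$ on $\mathbb{T}^2$ forces $\lambda > 0$.

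The second step is to invoke Theorem \ref{thm: GJS convergence}. Because $\mbox{Spin}(5)$ with $(J_\pm,\omega_{BF})$ is Bismut flat, and because the torsion class $[\partial\omega_0]\in H^{2,1}_{\bar\partial}$ is the image of $[\omega_0]\in H^{1,1}_A$ under the natural map $\partial\colon H^{1,1}_A\to H^{2,1}_{\bar\partial}$, the identity $[\omega_0]=\lambda[\omega_{BF}]$ in Aeppli cohomology implies $[\partial\omega_0]=\lambda[\partial\omega_{BF}]=[\partial(\lambda\omega_{BF})]$ in $H^{2,1}_{\bar\partial}$. Thus the hypotheses of Theorem \ref{thm: GJS convergence} are satisfied (with initial metric $\omega_0$ and reference Bismut flat metric $\lambda\omega_{BF}$), giving long-time existence on $[0,\infty)$ and convergence to some Bismut flat metric $\omega_\infty$.

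Finally, the step that pins down $\omega_\infty$ uses the rigidity of bi-invariant metrics on simple Lie groups. By the Alexandrov--Ivanov and WYZ characterization recalled in Section 2, the limit $\omega_\infty$ corresponds to a bi-invariant metric, and by Milnor's Lemma \ref{lem: Milnor} the only Lie group structure on $\mbox{Spin}(5)$ admitting a bi-invariant metric is the simple one, on which every invariant symmetric bilinear form is a multiple of the Killing form. Hence $\omega_\infty = \mu\,\omega_{BF}$ for some $\mu>0$; taking Aeppli classes gives $\mu = \lambda$. I expect no genuinely new obstacle beyond the SU$(3)$ argument; the only point that must be checked case by case is the one-dimensionality of $H^{1,1}_A(\mbox{Spin}(5))$, which is precisely the content of the computation carried out in Section \ref{subsec: computations Spin} and is the most laborious ingredient.
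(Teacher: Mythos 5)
Your proposal is correct and follows essentially the same route as the paper, which proves this case by repeating verbatim the argument of Theorem \ref{thm: mean theorem1}: integration over the complex maximal torus to get a positive Aeppli class, the one-dimensionality of $H^{1,1}_A(\mbox{Spin}(5))$, Theorem \ref{thm: GJS convergence} via the map $\partial\colon H^{1,1}_A\to H^{2,1}_{\bar\partial}$, and rigidity of bi-invariant metrics on a simple Lie group (Milnor's Lemma) to identify $\omega_\infty=\lambda\,\omega_{BF}$. No discrepancies worth noting.
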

    
    For completeness, we compute the whole Bott--Chern cohomology ring, which is
    \begin{eqnarray*}
        H^{\bullet,\bullet}_{BC}(\mbox{Spin}(5)) &=& \C\left\langle 1 \right\rangle \oplus \C\left\langle \left[ \varphi^{2\bar2} + \varphi^{3\bar3} + \varphi^{4\bar4} \right], \left[ \varphi^{3\bar4} - \varphi^{4\bar3} - \im\varphi^{5\bar5} \right] \right\rangle \\
        && \oplus \C\left\langle \left[
        \varphi^{12\bar2} + \varphi^{13\bar3} \mp \varphi^{13\bar4} \pm \varphi^{14\bar3} + \varphi^{14\bar4} \pm \im\varphi^{15\bar5} - \im\varphi^{25\bar3} + \varphi^{25\bar4} + \im\varphi^{35\bar2} - \varphi^{45\bar2}
        \right] \right\rangle\\
        && \oplus \C\left\langle \left[
        \varphi^{2\bar{12}} + \varphi^{3\bar1\bar3} \mp \varphi^{4\bar1\bar3} \pm \varphi^{3\bar1\bar4} + \varphi^{4\bar1\bar4} \mp \im\varphi^{5\bar1\bar5} + \im\varphi^{3\bar2\bar5} + \varphi^{4\bar2\bar5} - \im\varphi^{2\bar3\bar5} - \varphi^{2\bar4\bar5}
        \right] \right\rangle\\
        &&\oplus\, \C\left\langle \left[ 9\varphi^{23\bar{23}} + 9\varphi^{24\bar{24}} + (\im \mp 1)\varphi^{25\bar{13}} - (1 \pm\im)\varphi^{25\bar{14}} + 16\varphi^{34\bar{34}} + (\im \pm 1)\varphi^{35\bar{12}} \right.\right. \\
        &&\left.\left. - 9\im\varphi^{35\bar{45}} - (1 \mp\im)\varphi^{45\bar{12}} +9\im\varphi^{45\bar{35}} \right], \left[ 3\varphi^{23\bar{24}} - 3\varphi^{24\bar{23}} - (1\mp\im)\varphi^{25\bar{13}} -6\im\varphi^{25\bar{25}}  \right.\right.\\
        &&\left.\left. - (\im \pm 1)\varphi^{25\bar{14}} \mp 2\varphi^{34\bar{34}} + (1 \pm\im)\varphi^{35\bar{12}} - 3\im\varphi^{35\bar{35}} + (\im \mp 1)\varphi^{45\bar{12}} - 3\im\varphi^{45\bar{45}}
        \right] \right\rangle \\
        && \oplus \C\left\langle \left[ 
        316\varphi^{123\bar{23}} \mp 332\varphi^{123\bar{24}} \pm 332\varphi^{124\bar{23}} + 316\varphi^{124\bar{24}} - (42\im \mp 50)\varphi^{125\bar{13}} \right.\right.\\
        &&\left.\left. + (42 \pm 50\im)\varphi^{125\bar{14}} \pm 342\im\varphi^{125\bar{25}} + 788\varphi^{134\bar{34}} - (42\im \pm 50)\varphi^{135\bar{12}} \pm 324\im\varphi^{135\bar{35}} \right.\right.\\
        &&\left.\left. - 380 \im\varphi^{135\bar{45}} + (42 \mp 50\im)\varphi^{145\bar{12}} + (189 \mp 125\im)\varphi^{235\bar{34}} + (189\im \pm 125)\varphi^{245\bar{34}} \right.\right.\\
        &&\left.\left. + 380\im\varphi^{145\bar{35}} \pm 324\im\varphi^{145\bar{45}} - (189 \pm 125\im)\varphi^{345\bar{23}} - (189\im \mp 125)\varphi^{345\bar{24}}
        \right] \right\rangle   \\
        && \oplus \C\left\langle \left[ 
        316\varphi^{23\bar{123}} \mp 332\varphi^{24\bar{123}} \pm 332\varphi^{23\bar{124}} + 316\varphi^{24\bar{124}} + (42\im \pm 50)\varphi^{13\bar{125}} \right.\right.\\
        &&\left.\left. + (42 \mp 50\im)\varphi^{14\bar{125}} \mp 342\im\varphi^{25\bar{125}} + 788\varphi^{34\bar{134}} + (42\im \mp 50)\varphi^{12\bar{135}} \mp 324\im\varphi^{35\bar{135}} \right.\right.\\
        &&\left.\left. + 380 \im\varphi^{45\bar{135}} + (42 \pm 50\im)\varphi^{12\bar{145}} + (189 \pm 125\im)\varphi^{34\bar{235}} - (189\im \mp 125)\varphi^{34\bar{245}}  \right.\right.\\
        &&\left.\left.  - 380\im\varphi^{35\bar{145}} \mp 324\im\varphi^{45\bar{145}} - (189 \mp 125\im)\varphi^{23\bar{345}} + (189\im \pm 125)\varphi^{24\bar{345}}
        \right] \right\rangle   \\
        &&\oplus\, \C\left\langle \left[ 
        22\varphi^{123\bar{123}} - 11(1 \pm\im)\varphi^{123\bar{345}} + 22\varphi^{124\bar{124}} + 11(\im \mp 1)\varphi^{124\bar{345}} + 22\im\varphi^{135\bar{145}}  \right.\right.\\
        && \left.\left.  + 11(1 \mp\im)\varphi^{134\bar{235}} - 11(\im \pm 1)\varphi^{134\bar{245}} - 22\im\varphi^{145\bar{135}} \pm 208\im\varphi^{234\bar{234}} + 276\varphi^{235\bar{235}} \right.\right.\\
        && \left.\left. + (209 \pm\im)\varphi^{235\bar{134}} \pm 208\varphi^{235\bar{245}} + (209\im \mp 1)\varphi^{245\bar{134}} \mp 208\varphi^{245\bar{235}} + 276\varphi^{245\bar{245}} \right.\right.\\
        && \left.\left. - (209 \mp\im)\varphi^{345\bar{123}} - (209\im \pm 1)\varphi^{345\bar{124}} + 276\varphi^{345\bar{345}}
        \right], \left[ 22\varphi^{123\bar{124}} - 22\varphi^{124\bar{123}}\right.\right.\\
        && \left.\left.  - 11(\im \mp 1)\varphi^{123\bar{345}}  - 11(1 \pm\im)\varphi^{124\bar{345}} - 11(\im \pm 1)\varphi^{134\bar{235}} - 11(1 \mp\im)\varphi^{134\bar{245}} \right.\right.\\
        && \left.\left. + 44\im\varphi^{125\bar{125}} - 22\im\varphi^{135\bar{135}} - 22\im\varphi^{145\bar{145}} + 452\im\varphi^{234\bar{234}} - (\im \mp 341)\varphi^{235\bar{134}} \right.\right.\\
        && \left.\left. \pm 340\varphi^{235\bar{235}} + 452\varphi^{235\bar{245}} + (1 \pm 341\im)\varphi^{245\bar{134}}  - 452\varphi^{245\bar{235}} \pm 340\varphi^{245\bar{245}} \right.\right.\\
        && \left.\left. - (\im \pm 341)\varphi^{345\bar{123}} + (1 \mp 341\im)\varphi^{345\bar{124}} \pm 340\varphi^{345\bar{345}} \right] \right\rangle \\
        && \oplus \C\left\langle \left[ 
        7\varphi^{1234\bar{234}} + 2\varphi^{1235\bar{134}} \pm 7\im\varphi^{1235\bar{235}} - 7\im\varphi^{1235\bar{245}} + 2\im\varphi^{1245\bar{134}} \right.\right.\\
        && \left.\left. + 7\im\varphi^{1245\bar{235}}  \pm 7\im\varphi^{1245\bar{245}} + 2\varphi^{1345\bar{123}} + 2\im\varphi^{1345\bar{124}} \pm 7\im\varphi^{1345\bar{345}}
        \right] \right\rangle \\
        && \oplus \C\left\langle \left[ 
        7\varphi^{234\bar{1234}} + 2\varphi^{134\bar{1235}} \mp 7\im\varphi^{235\bar{1235}} + 7\im\varphi^{245\bar{1235}} - 2\im\varphi^{134\bar{1245}} \right.\right.\\
        && \left.\left. - 7\im\varphi^{235\bar{1245}} \mp 7\im\varphi^{245\bar{1245}} + 2\varphi^{123\bar{1345}} - 2\im\varphi^{124\bar{1345}} \mp 7\im\varphi^{345\bar{1345}}
        \right] \right\rangle \\
        && \oplus \C\left\langle \left[ 
        2\varphi^{1234\bar{1234}} + \im\varphi^{1235\bar{1245}} - \im\varphi^{1245\bar{1235}} + 3\varphi^{2345\bar{2345}}
        \right] \right\rangle \\
        && \oplus \C\left\langle \left[ \varphi^{12345\bar{2345}} \right] \right\rangle \oplus \C\left\langle \left[ \varphi^{2345\bar{12345}} \right] \right\rangle \oplus \C\left\langle \left[ \varphi^{12345\bar{12345}} \right] \right\rangle \;.
    \end{eqnarray*}
    Hence, the Bott-Chern numbers are
    $$
        \begin{array}{ccccccccccc}
        & & & & & 1 & & & & & \\
        & & & & 0 & & 0 & & & & \\
        & & & 0 & & 2 & & 0 & & & \\
        & & 0 & & 1 & & 1 & & 0 & & \\
        & 0 & & 0 & & 2 & & 0 & & 0 & \\
        0 & & 0 & & 1 & & 1 & & 0 & & 0 \\
        & 0 & & 0 & & 2 & & 0 & & 0 & \\
        & & 0 &  & 1 & & 1 & & 0 & & \\
        & & & 0 & & 1 & & 0 & & & \\
        & & & & 1 & & 1 & & & & \\
        & & & & & 1 & & & & & 
    \end{array} 
    $$
    
\subsection{G$_2$}\label{subsec: computations G2}
     Finally, we consider the Hermitian pluriclosed manifolds $\left(\mbox{G}_2,J_{\pm},\omega_{BF}\right)$ where $J_{\pm}$ are the only two isotropic left-invariant complex structure on G$_2$ as described in the Section \ref{subsec: isotrop cplx structure G2} and $\omega_{BF}$ represents the Hermitian metric coming from the Killing form, which is 
    \begin{equation}\label{eq: kill metric G2}
        \omega_{BF} := \im \left( 3\varphi^{1\bar1} + \varphi^{2\bar2} - 3\varphi^{3\bar3} + 12\varphi^{4\bar4} - 36\varphi^{5\bar5} + 36\varphi^{6\bar6} - 12\varphi^{7\bar7}  \right) \;.
    \end{equation}
    By computing the complex structure equations, we obtain
    $$\left\{\begin{array}{rcl}
            \p \varphi^1 &=& -2\varphi^{17} \\
            \p \varphi^2 &=& (\pm \im\sqrt{3} + 3)\varphi^{27} \\
            \p \varphi^3 &=&  \varphi^{12} + (\pm \im\sqrt{3} + 1)\varphi^{37}\\
            \p \varphi^4 &=&  \varphi^{13} + (\pm \im\sqrt{3} - 1)\varphi^{47}\\
            \p \varphi^5 &=&  \varphi^{14} + (\pm \im\sqrt{3} - 3)\varphi^{57}\\
            \p \varphi^6 &=&  \varphi^{25} - \varphi^{34} \pm 2\im\sqrt{3}\varphi^{67}\\
            \p \varphi^7 &=& 0
           \end{array}\right.
     \;\text{ and }\; 
     \left\{\begin{array}{rcl}
            \bar\p \varphi^1 &=& -\varphi^{2\bar3} + 4\varphi^{3\bar4} - 12\varphi^{4\bar5} - 2\varphi^{7\bar1}\\
            \bar\p \varphi^2 &=&  (\pm \im\sqrt{3} - 3)\varphi^{2\bar7} - 3\varphi^{3\bar1} - 36\varphi^{6\bar5} \\
            \bar\p \varphi^3 &=& (\pm \im\sqrt{3} - 1)\varphi^{3\bar7} - 4\varphi^{4\bar1} - 12\varphi^{6\bar4} \\
            \bar\p \varphi^4 &=& (\pm \im\sqrt{3} + 1)\varphi^{4\bar7} - 3\varphi^{5\bar1} - 3\varphi^{6\bar3}\\
            \bar\p \varphi^5 &=& (\pm \im\sqrt{3} + 3)\varphi^{5\bar7} - \varphi^{6\bar2}   \\
            \bar\p \varphi^6 &=& \pm 2\im\sqrt{3}\varphi^{6\bar7}  \\
            \bar\p \varphi^7 &=& 1/2\varphi^{1\bar1} + (\pm 1/12\im\sqrt{3} - 1/4)\varphi^{2\bar2} \\
            && + (\mp 1/4\im\sqrt{3} + 1/4)\varphi^{3\bar3} + (\pm \im\sqrt{3} + 1)\varphi^{4\bar4} \\
            && + (\mp3\im\sqrt{3} - 9)\varphi^{5\bar5} \pm 6\im\sqrt{3}\varphi^{6\bar6}
           \end{array}\right. 
    $$
    As for the previous cases, we use Pittie's model for the Dolbeaut cohomology of compact simply-connected simple Lie groups of rank two, \cite{Pit}. In particular, when G$_2$ is equipped with a isotropic left-invariant complex structure, (\ref{eq: model cohom}) gives
    \begin{equation*}
        H_{\bar\p}^{\bullet,\bullet}(\mbox{G}_2) \;\simeq\; \left. \C\left[y_{1,1}\right] \middle\slash \left(\left(y_{1,1}\right)^{6}\right) \right. \otimes \wedge^{\bullet,\bullet} \left( \C\left\langle [u_{2,1}] \right\rangle \oplus \C\left\langle [x_{0,1}] \right\rangle \right) \;,
    \end{equation*}
    and we recover the Hodge numbers
    $$
        \begin{array}{ccccccccccccccc}
          &   &   &   &   &   &   & 1 &   &   &   &   &   &   &  \\
          &   &   &   &   &   & 0 &   & 1 &   &   &   &   &   &  \\
          &   &   &   &   & 0 &   & 1 &   & 0 &   &   &   &   &  \\
          &   &   &   & 0 &   & 1 &   & 1 &   & 0 &   &   &   &  \\
          &   &   & 0 &   & 0 &   & 2 &   & 0 &   & 0 &   &   &  \\
          &   & 0 &   & 0 &   & 1 &   & 1 &   & 0 &   & 0 &   &  \\
          & 0 &   & 0 &   & 0 &   & 2 &   & 0 &   & 0 &   & 0 &  \\
        0 &   & 0 &   & 0 &   & 1 &   & 1 &   & 0 &   & 0 &   & 0\\
          & 0 &   & 0 &   & 0 &   & 2 &   & 0 &   & 0 &   & 0 &  \\
          &   & 0 &   & 0 &   & 1 &   & 1 &   & 0 &   & 0 &   &  \\
          &   &   & 0 &   & 0 &   & 2 &   & 0 &   & 0 &   &   &  \\
          &   &   &   & 0 &   & 1 &   & 1 &   & 0 &   &   &   &  \\
          &   &   &   &   & 0 &   & 1 &   & 0 &   &   &   &   &  \\
          &   &   &   &   &   & 1 &   & 0 &   &   &   &   &   &  \\
          &   &   &   &   &   &   & 1 &   &   &   &   &   &   &  
    \end{array} 
    $$
    As before, we consider the sub-complex of left-invariant forms
    $$ \iota \colon \bigwedge \left\langle \varphi^1,\, \varphi^2,\, \varphi^3,\, \varphi^4,\, \varphi^5, \, \varphi^6,\, \varphi^7, \, \bar\varphi^1,\, \bar\varphi^2,\, \bar\varphi^3,\, \bar\varphi^4,\,\bar\varphi^5,\,\bar\varphi^6,\,\bar\varphi^7 \right\rangle \hookrightarrow \wedge^{\bullet,\bullet}\mbox{G}_2 \;, $$
    and we check that the sub-complex has cohomologies $H^{0,1}_{\bar\p}(\mbox{G}_2)_{inv}$, $H^{1,1}_{\bar\p}(\mbox{G}_2)_{inv}$ and $H^{2,1}_{\bar\p}(\mbox{G}_2)_{inv}$ of dimension one, for both the complex structures $J_+$ and $J_-$.
    
    Therefore, $H_{\bar\p}(\iota)$ is an isomorphism and Theorem 1.3 and Proposition 2.2 of \cite{AK17} apply giving that also $H_{BC}(\iota)$ is an isomorphism. In particular, the Bott--Chern and the Aeppli cohomologies arise from just the left-invariant forms. A direct computation shows that $\dim H^{1,1}_{A}(\mbox{G}_2) \simeq \C$ and it is generated by the class of $\omega_{BF}$, that is
    $$ H^{1,1}_A(\mbox{G}_2) = \C\left<\left[3\varphi^{1\bar1} + \varphi^{2\bar2} - 3\varphi^{3\bar3} + 12\varphi^{4\bar4} - 36\varphi^{5\bar5} + 36\varphi^{6\bar6} - 12\varphi^{7\bar7}\right]\right> = \C \left<\left[ \omega_{BF} \right]\right>\;, $$
    Thus Theorem \ref{thm: GJS convergence} applies also in this case ensuring the global stability of the pluriclosed flow on $\left(\mbox{G}_2,J_\pm\right)$. 
    In particular, using the same arguments of Theorem \ref{thm: mean theorem1} we can prove the following result.
    \begin{thm}\label{thm: mean theorem3}
        Consider $\left(\mbox{G}_2,\omega_{BF},J_{\pm}\right)$ where $\omega_{BF}$ is the metric given by the Killing form (as in (\ref{eq: kill metric G2})) and $J_{\pm}$ are the isotropic left-invariant complex structure defined in Section \ref{subsec: isotrop cplx structure G2}. Given any pluriclosed metric $\omega_0$ on $\left(\mbox{G}_2,J_{\pm}\right)$ the solution to the pluriclosed flow with initial data $\omega_0$ exists on $[0, \infty)$ and converges to a Bismut flat metric $\omega_\infty$. In particular, there exists a positive $\lambda$ such that $[\omega_0] = \lambda [\omega_{BF}]$ in $H^{1,1}_A \left(\mbox{G}_2\right)$ and $\omega_\infty = \lambda\,\omega_{BF}$. 
    \end{thm}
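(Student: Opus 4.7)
The plan is to run the same three-step argument used for Theorem \ref{thm: mean theorem1}, replacing SU$(3)$ by G$_2$. First I would recall that by Samelson's construction in Section \ref{subsec: isotrop cplx structure G2}, both complex structures $J_\pm$ are obtained by choosing a complex structure on the Cartan subalgebra $\mathfrak{k}$ first and then extending via the positive root spaces. Consequently, the maximal torus $\mathbb{T}^2 \subset \mbox{G}_2$ is a complex submanifold with respect to $J_\pm$, which is the only feature of $J_\pm$ needed for the argument.

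Next, for an arbitrary pluriclosed initial datum $\omega_0$, I would show that $[\omega_0] \neq 0$ in $H^{1,1}_A(\mbox{G}_2)$ by integrating over $\mathbb{T}^2$: since $\mathbb{T}^2$ is a $(1,1)$-dimensional complex submanifold and $\omega_0$ is positive, $\int_{\mathbb{T}^2}\omega_0 > 0$, and this integral descends to the Aeppli class because exact currents of the appropriate type pair trivially. Then, using that $H^{1,1}_A(\mbox{G}_2) = \mathbb{C}\langle[\omega_{BF}]\rangle$ (established earlier in this subsection via the computation of the invariant sub-complex and the isomorphisms $H_{BC}(\iota)$, $H_A(\iota)$ coming from Theorem 1.3 and Proposition 2.2 of \cite{AK17}), I deduce the existence of $\lambda \in \mathbb{C}$ with $[\omega_0] = \lambda[\omega_{BF}]$; comparing the two (positive) integrals over $\mathbb{T}^2$ forces $\lambda > 0$.

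Having produced such a $\lambda$, I would apply Theorem \ref{thm: GJS convergence} to the Bismut flat datum $\lambda \omega_{BF}$ (which has the same Aeppli class, hence the same torsion class in $H^{2,1}_{\bar\p}$ via the natural map $H^{1,1}_A \xrightarrow{\partial} H^{2,1}_{\bar\p}$). This yields long-time existence of the pluriclosed flow with initial datum $\omega_0$ and convergence to some Bismut flat limit $\omega_\infty$ on $(\mbox{G}_2, J_\pm)$.

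Finally, to identify $\omega_\infty$ with $\lambda \omega_{BF}$ on the nose, I would invoke the same rigidity argument as in Theorem \ref{thm: mean theorem1}: by \cite{AI,WYZ} a Bismut flat metric on a compact simply-connected simple Lie group is bi-invariant, and by Milnor's Lemma \ref{lem: Milnor} together with the classification, the only bi-invariant metrics on $\mbox{G}_2$ are positive multiples of the Killing form. Hence $\omega_\infty = \mu\, \omega_{BF}$ for some $\mu > 0$, and the equality $\mu = \lambda$ follows from $[\omega_\infty] = [\omega_0] = \lambda[\omega_{BF}]$ in $H^{1,1}_A$. I do not expect a real obstacle here: the Aeppli computation is the substantive ingredient, but it has already been carried out in the subsection preceding the theorem, so the proof reduces to assembling these facts.
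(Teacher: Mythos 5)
Your proposal is correct and is essentially the paper's own argument: the paper proves Theorem \ref{thm: mean theorem3} by explicitly invoking ``the same arguments of Theorem \ref{thm: mean theorem1}'' (complex maximal torus, integration over $\mathbb{T}^2$ to get $\lambda>0$ from the one-dimensional $H^{1,1}_A(\mathrm{G}_2)=\C\langle[\omega_{BF}]\rangle$, Theorem \ref{thm: GJS convergence}, and the bi-invariance/Killing-form rigidity via Lemma \ref{lem: Milnor}). Your only additions — making explicit that Aeppli classes pair with $\mathbb{T}^2$ by Stokes and that $\mu=\lambda$ follows from comparing Aeppli classes of $\omega_\infty$ and $\omega_0$ — are faithful elaborations of steps the paper leaves implicit.
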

    
\section*{Conflict of Interest}
The author declares that he has no conflict of interest.

\end{document}